\theoremstyle{plain}
\newtheorem{theoalph}{Theorem}
\newtheorem{thmalph}[theoalph]{Theorem}
\newtheorem{propalph}[theoalph]{Proposition}
\newtheorem{coralph}[theoalph]{Corollary}
\theoremstyle{definition}
\theoremstyle{remark}
\theoremstyle{plain}
\newtheorem{thmsec}{Theorem}[section]
\newtheorem*{theolaidmihoubi}{Theorem}
\newtheorem{pro}[thmsec]{Proposition}
\newtheorem{lem}[thmsec]{Lemma}
\theoremstyle{definition}
\theoremstyle{remark}
\def\og{\leavevmode\raise.3ex\hbox{$\scriptscriptstyle\langle\!\langle$~}}
\def\fg{\leavevmode\raise.3ex\hbox{~$\!\scriptscriptstyle\,\rangle\!\rangle$}}
\numberwithin{equation}{section}       
\begin{document}
\selectlanguage{english}
\title[Quadrinomial coefficients]{Congruences concerning quadrinomial coefficients}

\date{\today}

\author{Mohammed \textsc{Mechacha}}

\address{Facult\'e de Math\'ematiques, USTHB, BP $32$, El-Alia, $16111$ Bab-Ezzouar, Alger, Alg\'erie}
\email{mmechacha@usthb.dz}

\keywords{Quadrinomial coefficients, harmonic numbers, congruences}
\maketitle{}

\begin{abstract}
In this paper, we establish congruences (mod $p^2$) involving the quadrinomial coefficients $\dbinom{np-1}{p-1}_{3}$ and~$\dbinom{np-1}{\frac{p-1}{2}}_{3}.$ This is an analogue of congruences involving the trinomial coefficients $\dbinom{np-1}{p-1}_{2}$ and $\dbinom{np-1}{\frac{p-1}{2}}_{2}$ due to Elkhiri and Mihoubi.

\noindent{\it 2010 Mathematics Subject Classification. --- 11B65, 11A07, 05A10.}
\end{abstract}

\section{Introduction and statements of results}
\bigskip

\noindent Several mathematicians studied, for a prime number $p$, congruences modulo powers of $p$ involving the binomial coefficients $\dbinom{2p-1}{p-1}$ and $\dbinom{p-1}{\frac{p-1}{2}}$, see for instance~\cite{Bab19,Wol62,Mor95,Glai00Quart1,Car53Canad,Car53Lond,Zha07}.

\noindent In $2014$, \textsc{Sun} \cite{Sun14} and \textsc{Cao}~\&~\textsc{Pan} \cite{CP14} described some properties and congruences involving the trinomial coefficients $\dbinom{n}{k}_{2}$ defined by
\[
\left(1+x+x^{2}\right)^{n}=\overset{2n}{\underset{k=0}{\sum}}\binom{n}{k}_{2}x^{k}.
\]

\noindent In $2019$, \textsc{Elkhiri} and \textsc{Mihoubi} \cite{EM19arxiv} studied congruences modulo $p^{2}$ for the trinomial coefficients $\dbinom{np-1}{p-1}_{2}$ and $\dbinom{np-1}{\frac{p-1}{2}}_{2}.$ More precisely, they proved the following result.

\begin{theolaidmihoubi}[\cite{EM19arxiv}, \rm{Theorem~1}]
{\sl Let $p\geq5$ be a prime number and $n$ be a positive integer. We have
\begin{equation*}
\hspace{-1.8cm}\binom{np-1}{p-1}_{2}\equiv
\left\{
\begin{array}{ll}
\hspace{-1mm}1+npq_{p}(3)\hspace{1mm}(\mathrm{mod}\hspace{0.3mm} p^2) &\text{if}\hspace{1.5mm} p\equiv1\,(\mathrm{mod}\hspace{0.3mm} 3),
\medskip\\
\hspace{-1mm}-1-npq_{p}(3)\hspace{1mm}(\mathrm{mod}\hspace{0.3mm} p^2) &\text{if}\hspace{1.5mm} p\equiv2\,(\mathrm{mod}\hspace{0.3mm} 3),
\end{array}
\right.
\end{equation*}
and
\begin{equation*}\label{equa:congruence-Bi3nomial-np-1-p-1-sur-2}
\binom{np-1}{\frac{p-1}{2}}_{2}\equiv
\left\{
\begin{array}{ll}
\hspace{-1mm}1+np\Big(2q_{p}(2)+\frac{1}{2}q_{p}(3)\Big)\hspace{1mm}(\mathrm{mod}\hspace{0.3mm} p^2) &\text{if}\hspace{1.5mm} p\equiv1\,(\mathrm{mod}\hspace{0.3mm} 6),
\medskip\\
\hspace{-1mm}-\frac{1}{2}npq_{p}(3)\hspace{1mm}(\mathrm{mod}\hspace{0.3mm} p^2) &\text{if}\hspace{1.5mm} p\equiv5\,(\mathrm{mod}\hspace{0.3mm} 6),
\end{array}
\right.
\end{equation*}
where $q_{p}(a)$ is the \textsc{Fermat} quotient defined, for $a\in\mathbb{Z}-p\mathbb{Z}$, by $q_{p}(a)=\frac{a^{p-1}-1}{p}.$
}
\end{theolaidmihoubi}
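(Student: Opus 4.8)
The natural plan is to work with the generating function. Writing $N=np-1$, one has $\binom{N}{k}_{2}=[x^{k}]\,(1+x+x^{2})^{N}$, and the identity $1+x+x^{2}=\frac{1-x^{3}}{1-x}$ yields
\[
(1+x+x^{2})^{np-1}=\frac{1-x}{1-x^{3}}\,(1-x^{3})^{np}(1-x)^{-np}
=\Big(\sum_{k\ge0}c_{k}x^{k}\Big)\Big(\sum_{a\ge0}(-1)^{a}\binom{np}{a}x^{3a}\Big)\Big(\sum_{b\ge0}\binom{np+b-1}{b}x^{b}\Big),
\]
where, from $\frac{1-x}{1-x^{3}}=\sum_{i\ge0}\bigl(x^{3i}-x^{3i+1}\bigr)$, one has $c_{k}=1,-1,0$ according as $k\equiv0,1,2\pmod3$. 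Since only the coefficients of $x^{p-1}$ and $x^{(p-1)/2}$ are needed, all the indices $a,b\ge1$ that occur satisfy $a,b\le p-1$, and there the elementary congruences $\binom{np}{a}\equiv(-1)^{a-1}\tfrac{np}{a}\pmod{p^{2}}$ and $\binom{np+b-1}{b}\equiv\tfrac{np}{b}\pmod{p^{2}}$ hold; in particular both are $\equiv0\pmod p$. Hence any product of an $a\ge1$ factor with a $b\ge1$ factor is $\equiv0\pmod{p^{2}}$, so up to degree $p-1$
\[
(1+x+x^{2})^{np-1}\equiv\Big(\sum_{k}c_{k}x^{k}\Big)\Big(1+\sum_{a\ge1}(-1)^{a}\binom{np}{a}x^{3a}+\sum_{b\ge1}\binom{np+b-1}{b}x^{b}\Big)\pmod{p^{2}} .
\]

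Reading off the coefficient of $x^{k}$ for $k\le p-1$, using $c_{k-3a}=c_{k}$, and substituting the two binomial congruences, I would obtain
\[
\binom{np-1}{k}_{2}\equiv c_{k}\Bigl(1-np\,H_{\lfloor k/3\rfloor}\Bigr)+np\sum_{b=1}^{k}\frac{c_{k-b}}{b}\pmod{p^{2}},
\qquad H_{m}=\sum_{i=1}^{m}\frac1i .
\]
Thus the statement becomes equivalent to two congruences modulo $p$ for the residue-restricted partial harmonic sum
\[
\sum_{b=1}^{k}\frac{c_{k-b}}{b}=\sum_{\substack{1\le b\le k\\ b\equiv k\,(3)}}\frac1b-\sum_{\substack{1\le b\le k\\ b\equiv k-1\,(3)}}\frac1b ,
\]
taken at $k=p-1$ and at $k=(p-1)/2$, combined with the term $-c_{k}H_{\lfloor k/3\rfloor}$. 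Here the leading behaviour of the theorem is already visible: $c_{p-1}=1$ if $p\equiv1\pmod3$ and $-1$ if $p\equiv2\pmod3$, while $c_{(p-1)/2}=1$ if $p\equiv1\pmod6$ and $c_{(p-1)/2}=0$ if $p\equiv5\pmod6$ — which is exactly why the last case of the theorem has no constant term.

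To finish, I would evaluate the restricted sums modulo $p$ by classical tools: Wolstenholme's $\sum_{b=1}^{p-1}\frac1b\equiv0\pmod{p^{2}}$ and $H_{(p-1)/2}\equiv-2q_{p}(2)\pmod p$; the Glaisher/Lehmer congruences $H_{\lfloor p/3\rfloor}\equiv-\tfrac32q_{p}(3)$ and $H_{\lfloor p/6\rfloor}\equiv-2q_{p}(2)-\tfrac32q_{p}(3)\pmod p$ (all for $p\ge5$); the relation $\sum_{b\le k,\,3\mid b}\frac1b=\tfrac13H_{\lfloor k/3\rfloor}$, which injects the $q_{p}(3)$ (and, when $k=(p-1)/2$, also the $q_{p}(2)$) contributions; and the reflection $b\mapsto p-b$, which turns a class $b\equiv r\pmod3$ into $b\equiv p-r\pmod3$ and, in the half-range sums, $b\le(p-1)/2$ into $b\ge(p+1)/2$, thereby expressing the remaining residue class in terms of the others. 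Running this through the four cases $p\equiv1,2\pmod3$ (first congruence) and $p\equiv1,5\pmod6$ (second congruence) and substituting should reproduce the four right-hand sides exactly.

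The generating-function reduction and the passage to harmonic sums are routine; the delicate part will be the exact evaluation of the residue-class-restricted partial harmonic sums modulo $p$ — particularly for $k=(p-1)/2$, where the moduli $2$ and $3$ interact (hence the split modulo $6$ and the appearance of $q_{p}(2)$). One has to keep the full mod-$p$ content of each Glaisher-type congruence and track the sign and residue shifts produced by $b\mapsto p-b$; that bookkeeping is where errors are most likely to creep in.
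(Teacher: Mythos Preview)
The paper does not prove this statement: it is quoted verbatim from \cite{EM19arxiv} as background, and no argument for it appears anywhere in the text. So there is no ``paper's own proof'' to compare against directly.

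That said, your outline is sound and would go through. The rational-function identity $1+x+x^{2}=(1-x^{3})/(1-x)$, the mod-$p^{2}$ expansions of $\binom{np}{a}$ and $\binom{np+b-1}{b}$, and the vanishing of the cross terms with $a,b\ge1$ are all correct, and your key formula
\[
\binom{np-1}{k}_{2}\equiv c_{k}\bigl(1-np\,H_{\lfloor k/3\rfloor}\bigr)+np\sum_{b=1}^{k}\frac{c_{k-b}}{b}\pmod{p^{2}}
\]
is right. The remaining evaluation via $H_{\lfloor p/3\rfloor}\equiv-\tfrac32 q_{p}(3)$, $H_{\lfloor p/6\rfloor}\equiv-2q_{p}(2)-\tfrac32 q_{p}(3)$, Wolstenholme, and the reflection $b\mapsto p-b$ is routine, exactly as you say; the only thing left is the (admittedly fiddly) bookkeeping you flagged.

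It is worth noting how your method relates to what the paper \emph{does} prove, namely the quadrinomial analogue (Theorem~A via Proposition~\ref{P}). There the authors exploit the polynomial factorisation $1+x+x^{2}+x^{3}=(1+x)(1+x^{2})$, which gives $\binom{n}{k}_{3}=\sum_{j}\binom{n}{j}\binom{n}{k-2j}$ as a finite sum of products of ordinary binomials, and then feed in $\binom{np-1}{m}\equiv(-1)^{m}(1-npH_{m})\pmod{p^{2}}$. That trick is unavailable for trinomials because $1+x+x^{2}$ is irreducible over~$\mathbb{Q}$; your passage to the rational identity $(1-x^{3})/(1-x)$ is precisely the natural replacement. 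Both routes land on the same kind of object---linear combinations of residue-class-restricted partial harmonic sums evaluated mod~$p$ by Glaisher-type congruences---so your approach is the genuine trinomial counterpart of the paper's quadrinomial argument, not a detour.
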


\noindent Analogously, one defines the quadrinomial coefficients $\binom{n}{k}_{3}$ by
\[
\left(1+x+x^2+x^3\right)^{n}=\overset{3n}{\underset{k=0}{\sum}}\binom{n}{k}_{3}x^{k}.
\]
In this paper, we study congruences modulo $p^{2}$ for the quadrinomial coefficients $\dbinom{np-1}{p-1}_{3}$ and $\dbinom{np-1}{\frac{p-1}{2}}_{3}.$ Our~purpose is to establish the following analogous result.
\begin{thmalph}\label{thmalph:Bi3nomial-np-1-p-1-p-1-sur-2}
{\sl Let $p\geq5$ be a prime number and $n$ be a positive integer. We have
\begin{equation}\label{equa:congruence-Bi3nomial-np-1-p-1}
\hspace{-1.8cm}\binom{np-1}{p-1}_{3}\equiv
\left\{
\begin{array}{ll}
\hspace{-1mm}1+2npq_{p}(2)\hspace{1mm}(\mathrm{mod}\hspace{0.3mm} p^2) &\text{if}\hspace{1.5mm} p\equiv1\,(\mathrm{mod}\hspace{0.3mm} 4),
\medskip\\
\hspace{-1mm}-\frac{1}{2}npq_{p}(2)\hspace{1mm}(\mathrm{mod}\hspace{0.3mm} p^2) &\text{if}\hspace{1.5mm} p\equiv3\,(\mathrm{mod}\hspace{0.3mm} 4),
\end{array}
\right.
\end{equation}
and
\begin{equation}\label{equa:congruence-Bi3nomial-np-1-p-1-sur-2}
\binom{np-1}{\frac{p-1}{2}}_{3}\equiv
\left\{
\begin{array}{ll}
\hspace{-1mm}1+np\Big(\frac{13}{4}q_{p}(2)+\chi_{p}\Big)\hspace{1mm}(\mathrm{mod}\hspace{0.3mm} p^2) &\text{if}\hspace{1.5mm} p\equiv1\,(\mathrm{mod}\hspace{0.3mm} 8),
\medskip\\
\hspace{-1mm}-1-np\Big(\frac{13}{4}q_{p}(2)+\chi_{p}\Big)\hspace{1mm}(\mathrm{mod}\hspace{0.3mm} p^2) &\text{if}\hspace{1.5mm} p\equiv3\,(\mathrm{mod}\hspace{0.3mm} 8),
\medskip\\
\hspace{-1mm}-np\Big(\frac{1}{4}q_{p}(2)-\chi_{p}\Big)\hspace{1mm}(\mathrm{mod}\hspace{0.3mm} p^2) &\text{if}\hspace{1.5mm} p\equiv5\,(\mathrm{mod}\hspace{0.3mm} 8),
\medskip\\
\hspace{-1mm}np\Big(\frac{1}{4}q_{p}(2)-\chi_{p}\Big)\hspace{1mm}(\mathrm{mod}\hspace{0.3mm} p^2) &\text{if}\hspace{1.5mm} p\equiv7\,(\mathrm{mod}\hspace{0.3mm} 8),
\end{array}
\right.
\end{equation}
where $\chi_{p}:=P_{p-(\frac{2}{p})}/p$ is the \textsc{Pell} quotient and $(P_{n})_{n}$ is the Pell sequence (OEIS A000129).
}
\end{thmalph}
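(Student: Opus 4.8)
The backbone of the argument is the factorization $1+x+x^{2}+x^{3}=(1+x)(1+x^{2})$, which gives $(1+x+x^{2}+x^{3})^{np-1}=(1+x)^{np-1}(1+x^{2})^{np-1}$ and hence, by comparing coefficients,
\[
\binom{np-1}{p-1}_{3}=\sum_{i=0}^{(p-1)/2}\binom{np-1}{i}\binom{np-1}{p-1-2i},
\]
\[
\binom{np-1}{\frac{p-1}{2}}_{3}=\sum_{i=0}^{\lfloor(p-1)/4\rfloor}\binom{np-1}{i}\binom{np-1}{\frac{p-1}{2}-2i}.
\]
In every term both lower indices lie in $\{0,1,\dots,p-1\}$, so one may substitute the elementary expansion
\[
\binom{np-1}{k}\equiv(-1)^{k}\bigl(1-np\,H_{k}\bigr)\pmod{p^{2}}\qquad(0\le k\le p-1),\qquad H_{k}:=\sum_{j=1}^{k}\frac1j,
\]
multiply out, and discard the $n^{2}p^{2}$-terms. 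The product of the two signs collapses to $(-1)^{i}$ in the first sum and to $\bigl(\tfrac{-1}{p}\bigr)(-1)^{i}$ in the second, so that, modulo $p^{2}$,
\[
\binom{np-1}{p-1}_{3}\equiv C_{0}-np\,S_{0},\qquad \binom{np-1}{\frac{p-1}{2}}_{3}\equiv\Bigl(\tfrac{-1}{p}\Bigr)\bigl(C_{1}-np\,S_{1}\bigr),
\]
where $C_{0}=\sum_{i=0}^{(p-1)/2}(-1)^{i}$, $C_{1}=\sum_{i=0}^{\lfloor(p-1)/4\rfloor}(-1)^{i}$ and $S_{0},S_{1}$ are the corresponding alternating sums of $H_{i}+H_{(\text{second index})}$.

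The constant parts are handled at once: a partial alternating sum $\sum_{i=0}^{N}(-1)^{i}$ is $1$ if $N$ is even and $0$ if $N$ is odd, and evaluating this for $N=(p-1)/2$, resp. $N=\lfloor(p-1)/4\rfloor$, and multiplying by the Legendre symbol $\bigl(\tfrac{-1}{p}\bigr)$, reproduces exactly the leading $\pm1$ or $0$ in the two (resp. four) residue classes modulo $4$ (resp. $8$) appearing in the statement. For the harmonic parts I would first use the reflection congruence $H_{p-1-m}\equiv H_{m}\pmod p$ — a consequence of Wolstenholme's $H_{p-1}\equiv0\pmod{p^{2}}$ — to rewrite $H_{p-1-2i}$ as $H_{2i}$, and similarly reduce $H_{(p-1)/2-2i}$ modulo $p$ by reflecting inside $[1,(p-1)/2]$; then interchange the order of summation. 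Each of the resulting sums telescopes to a partial harmonic sum over an arithmetic progression, the typical shapes being $\tfrac12H_{(p-1)/2}$, $\tfrac14H_{(p-1)/4}$, $\sum_{j\equiv3\,(4),\,j\le p-2}1/j$ and $\tfrac34H_{\lfloor p/8\rfloor}+\sum_{j\equiv3\,(4),\,j\le(p-1)/2}1/j$. The sums running up to about $p$ or $p/4$ are evaluated by the classical congruences $H_{(p-1)/2}\equiv-2q_{p}(2)$, $H_{\lfloor p/4\rfloor}\equiv-3q_{p}(2)$ and the companion values of $\sum1/j$ over residues $1,3\pmod4$ below $p$, all of which are rational multiples of $q_{p}(2)$; this produces every $q_{p}(2)$-contribution and, in particular, settles \eqref{equa:congruence-Bi3nomial-np-1-p-1} completely.

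The one genuinely new ingredient, needed only for the second congruence, is an evaluation of the \emph{octic} harmonic sum $H_{\lfloor p/8\rfloor}$. Unlike $H_{\lfloor p/2\rfloor}$ and $H_{\lfloor p/4\rfloor}$, this is not a rational multiple of a Fermat quotient — since $8$ is the conductor of $\mathbb{Q}(\sqrt2)$ it also involves the Pell--Wieferich invariant — and the precise relation one needs is
\[
H_{\lfloor p/8\rfloor}\equiv-4\,q_{p}(2)-2\,\chi_{p}\pmod p\qquad\text{for }p\equiv1\pmod 8,
\]
together with its counterparts in the classes $p\equiv3,5,7\pmod 8$ (in which $P_{p+1}$ replaces $P_{p-1}$ whenever $\bigl(\tfrac2p\bigr)=-1$). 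I would prove this by expanding $(1+\sqrt2)^{\,p-(\frac2p)}=\sum_{k}\binom{p-(\frac2p)}{k}(\sqrt2)^{k}$, extracting the coefficient of $\sqrt2$ to obtain $P_{p-(\frac2p)}$, inserting the standard congruences modulo $p^{2}$ for $\binom{p-1}{k}$ and $\binom{p+1}{k}$, using $2^{(p-1)/2}\equiv\bigl(\tfrac2p\bigr)\bigl(1+\tfrac p2 q_{p}(2)\bigr)\pmod{p^{2}}$ for the powers of $2$, and interchanging summation to read off $H_{\lfloor p/8\rfloor}$ and $q_{p}(2)$; alternatively one may simply cite a known evaluation of $H_{\lfloor p/8\rfloor}$. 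Substituting all of this into $S_{1}$ and carrying out the four-case bookkeeping modulo $8$ gives the second congruence of Theorem~\ref{thmalph:Bi3nomial-np-1-p-1-p-1-sur-2}. The main obstacle is exactly the Pell-quotient congruence for $H_{\lfloor p/8\rfloor}$; all the rest is routine, if somewhat lengthy, manipulation of harmonic sums modulo $p$, the only delicate point being the interplay of the signs $\bigl(\tfrac{-1}{p}\bigr)$, $\bigl(\tfrac2p\bigr)$ and the floor functions in the summation ranges.
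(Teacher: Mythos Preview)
Your proposal is correct and follows essentially the same route as the paper: the factorization $(1+x)(1+x^{2})$, the expansion $\binom{np-1}{k}\equiv(-1)^{k}(1-npH_{k})\pmod{p^{2}}$, and the evaluation of the resulting harmonic sums via $H_{\lfloor p/2\rfloor}$, $H_{\lfloor p/4\rfloor}$, and the Pell-quotient congruence for $H_{\lfloor p/8\rfloor}$. The only organisational differences are that the paper first packages the computation into a general proposition giving $\binom{np-1}{4k+r}_{3}\pmod{p^{2}}$ for $r=0,1,2,3$ (reindexing $j\mapsto 2k-j$ rather than using the Wolstenholme reflection you invoke) and then specialises, and that it simply cites Williams for $H_{\lfloor p/8\rfloor}\equiv-4q_{p}(2)-2\chi_{p}\pmod p$ rather than deriving it.
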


\noindent Using this theorem we obtain the following proposition, which is an analogue of Proposition~3~of~\cite{EM19arxiv}.
\begin{propalph}\label{propalph:sommes-Bi3nomial-np-1-k}
{\sl
Let $p\geq5$ be a prime number and $n$ be a positive integer. Then
\begin{equation}\label{equa:sommes-Bi3nomial-np-1-k-k-0-p-1}
\hspace{-1.4cm}\overset{p-1}{\underset{k=0}{\sum}}\binom{np-1}{k}_{3}\equiv \left\{
\begin{array}{ll}
\hspace{-1mm}1+\frac{9}{4}npq_{2}(2)\hspace{1mm}(\mathrm{mod}\hspace{0.3mm}p^{2})&\text{if}\hspace{1.5mm} p\equiv 1\,(\mathrm{mod}\hspace{0.3mm} 4),
\medskip\\
\hspace{-1mm}-\frac{1}{4}npq_{2}(2)\hspace{1mm}(\mathrm{mod}\hspace{0.3mm}p^{2})&\text{if}\hspace{1.5mm} p\equiv 3\,(\mathrm{mod}\hspace{0.3mm} 4),
\end{array}
\right.
\end{equation}
and
\begin{equation}\label{equa:sommes-Bi3nomial-np-1-k-k-0-p-1-sur-2}
\overset{\frac{p-1}{2}}{\underset{k=0}{\sum}}\binom{np-1}{k}_{3}\equiv \left\{
\begin{array}{ll}
\hspace{-1mm}1+\frac{3}{2}np\Big(2q_{p}(2)+\chi_{p}\Big)\hspace{1mm}(\mathrm{mod}\hspace{0.3mm}p^{2})&\text{if}\hspace{1.5mm} p\equiv 1\,(\mathrm{mod}\hspace{0.3mm} 8),
\medskip\\
\hspace{-1mm}-\frac{1}{4}np\Big(q_{p}(2)-2\chi_{p}\Big)\hspace{1mm}(\mathrm{mod}\hspace{0.3mm}p^{2})&\text{if}\hspace{1.5mm} p\equiv 3\,(\mathrm{mod}\hspace{0.3mm} 8),
\medskip\\
\hspace{-1mm}-\frac{1}{2}np\Big(q_{p}(2)-\chi_{p}\Big)\hspace{1mm}(\mathrm{mod}\hspace{0.3mm}p^{2})&\text{if}\hspace{1.5mm} p\equiv 5\,(\mathrm{mod}\hspace{0.3mm} 8),
\medskip\\
\hspace{-1mm}-\frac{1}{4}np\Big(q_{p}(2)+2\chi_{p}\Big)\hspace{1mm}(\mathrm{mod}\hspace{0.3mm}p^{2})&\text{if}\hspace{1.5mm} p\equiv 7\,(\mathrm{mod}\hspace{0.3mm} 8).
\end{array}
\right.
\end{equation}
}
\end{propalph}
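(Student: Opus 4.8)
The plan is to reduce each partial sum to the single quadrinomial coefficient already computed in Theorem~\ref{thmalph:Bi3nomial-np-1-p-1-p-1-sur-2}, plus an explicit ``tail'' which I would evaluate modulo $p^{2}$ by a Lucas-type computation. First I would record the generating-function identity: from $\sum_{k=0}^{m}\binom{np-1}{k}_{3}=[x^{m}]\bigl((1+x+x^{2}+x^{3})^{np-1}/(1-x)\bigr)$ and $1+x+x^{2}+x^{3}=(1-x^{4})/(1-x)$ one gets
\[
\frac{(1+x+x^{2}+x^{3})^{np-1}}{1-x}=\frac{(1-x^{4})^{np-1}}{(1-x)^{np}}=\frac{(1+x+x^{2}+x^{3})^{np}}{1-x^{4}},
\]
hence, for every integer $m\ge 0$,
\[
\sum_{k=0}^{m}\binom{np-1}{k}_{3}=\sum_{j\ge 0}\binom{np}{m-4j}_{3}=\binom{np-1}{m}_{3}+\sum_{j\ge 0}\binom{np}{m-1-4j}_{3},
\]
the last step because $\binom{np-1}{m}_{3}=\sum_{j}\binom{np}{m-4j}_{3}-\sum_{j}\binom{np}{m-1-4j}_{3}$. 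I would apply this with $m=p-1$ and with $m=\tfrac{p-1}{2}$; note that every index $m-1-4j$ occurring in the tail is then $<p$.

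The next step is to evaluate $\binom{np}{l}_{3}$ modulo $p^{2}$ for $1\le l\le p-1$. Writing $(1+x+x^{2}+x^{3})^{np}=(1+x)^{np}(1+x^{2})^{np}$ gives $\binom{np}{l}_{3}=\sum_{b}\binom{np}{l-2b}\binom{np}{b}$, with all binomial indices in $[0,p-1]$; inserting the classical congruence $\binom{np}{b}\equiv np\,(-1)^{b-1}/b\pmod{p^{2}}$ (for $1\le b\le p-1$, with $\binom{np}{0}=1$) and discarding the term in $(np)^{2}$, a short computation should give
\[
\binom{np}{l}_{3}\equiv\frac{np}{l}\,\varepsilon_{l}\pmod{p^{2}},\qquad \varepsilon_{l}=\begin{cases}-3&\text{if }l\equiv 0\pmod 4,\\ 1&\text{otherwise,}\end{cases}
\]
together with $\binom{np}{0}_{3}=1$. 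Since all indices $m-1-4j$ lie in the single residue class of $m-1$ modulo $4$, the tail $\sum_{j\ge 0}\binom{np}{m-1-4j}_{3}$ becomes $np$ times a sum of reciprocals $1/l$ over an arithmetic progression of common difference $4$, together with an extra summand $1$ exactly when $4\mid m-1$.

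It then remains to evaluate these reciprocal sums modulo $p$. For $m=p-1$ the progression is $\{\,l\le p-2:\ l\equiv p-2\pmod 4\,\}$; using $1/l\equiv-1/(p-l)\pmod p$ together with the Glaisher--Lehmer congruences $\sum_{k=1}^{(p-1)/2}1/k\equiv-2q_{p}(2)$ and $\sum_{k=1}^{\lfloor p/4\rfloor}1/k\equiv-3q_{p}(2)\pmod p$, I expect the sum to collapse to $\tfrac14 q_{p}(2)$ for both residues of $p$ modulo $4$, so that $\sum_{k=0}^{p-1}\binom{np-1}{k}_{3}\equiv\binom{np-1}{p-1}_{3}+\tfrac14 np\,q_{p}(2)\pmod{p^{2}}$; substituting \eqref{equa:congruence-Bi3nomial-np-1-p-1} then yields \eqref{equa:sommes-Bi3nomial-np-1-k-k-0-p-1}. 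For $m=\tfrac{p-1}{2}$ the tail index is $\tfrac{p-3}{2}-4j$, whose residue modulo $4$ runs through $3,0,1,2$ as $p$ runs through $1,3,5,7$ modulo $8$; handling the four cases and reducing as before, the tail should come out in terms of $q_{p}(2)$ and the partial sum $\sum_{\text{odd }k\le p/4}1/k$, which by a Lehmer-type congruence for the Pell quotient equals $-q_{p}(2)+\chi_{p}\pmod p$ (with a companion congruence for $\sum_{k\le p/8}1/k$ needed when $p\equiv 3\pmod 8$); adding \eqref{equa:congruence-Bi3nomial-np-1-p-1-sur-2} would then give \eqref{equa:sommes-Bi3nomial-np-1-k-k-0-p-1-sur-2}.

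The hard part is this last step: one must have in hand the exact congruences modulo $p$ for the truncated reciprocal sums over residue classes modulo $4$, and especially the appearance of the Pell quotient $\chi_{p}$ through $\sum_{\text{odd }k\le p/4}1/k$ and its $\sum_{k\le p/8}$ companion. These are precisely the arithmetic inputs already produced in the proof of Theorem~\ref{thmalph:Bi3nomial-np-1-p-1-p-1-sur-2}; once they are quoted, what remains is only the bookkeeping of the residue of $p$ modulo $8$ through the four cases.
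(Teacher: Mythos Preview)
Your approach is correct and genuinely different from the paper's. The paper never passes to coefficients with upper index $np$; instead it uses Proposition~\ref{P} to show that each block of four consecutive quadrinomial coefficients satisfies
\[
\binom{np-1}{4k}_{3}+\binom{np-1}{4k+1}_{3}+\binom{np-1}{4k+2}_{3}+\binom{np-1}{4k+3}_{3}\equiv \frac{np}{4k+3}\pmod{p^{2}},
\]
then writes the partial sum as a sum of complete blocks plus the boundary term $\binom{np-1}{m}_{3}$ (already evaluated in Theorem~\ref{thmalph:Bi3nomial-np-1-p-1-p-1-sur-2}), and finishes with the reciprocal sums of Lemmas~2.2--2.3. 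Your route replaces this block decomposition by the generating-function identity $\sum_{k\le m}\binom{np-1}{k}_{3}=\sum_{j\ge 0}\binom{np}{m-4j}_{3}$ and the direct evaluation $\binom{np}{l}_{3}\equiv \varepsilon_{l}\,np/l\pmod{p^{2}}$, which is an elegant shortcut: it bypasses Proposition~\ref{P} entirely and lands immediately on a single reciprocal sum over one residue class modulo $4$. Both proofs ultimately feed on the same congruences for $H_{[p/4]}$ and $H_{[p/8]}$. Your description of exactly which truncated reciprocal sum appears in each case modulo~$8$ is a bit loose (for instance, for $p\equiv 1\pmod 8$ the tail is $\sum_{k=0}^{(p-9)/8}\tfrac{1}{4k+3}$ rather than literally $\sum_{\text{odd }k\le p/4}1/k$, and for $p\equiv 3\pmod 8$ it is $-\tfrac{3}{4}H_{[p/8]}$), but these are precisely the sums handled in the paper's lemmas, so the bookkeeping goes through as you say.
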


\noindent Corollary~4~of~\cite{EM19arxiv} gives congruences modulo $p^{2}$ for the coefficients $\binom{np^{2}-1}{k}_{2}.$ A similar congruences for the coefficients $\binom{np^{2}-1}{k}_{3}$ is given by the following statement.
\begin{coralph}\label{coralph:Bi3nomial-np2-1-k}
{\sl
Let $p\geq5$ be a prime number and $n,k$ be integers with $n\geq1 $
and $k\in \left \{  0,1,\ldots,p-1\right \}  .$ We~have
\begin{equation}\label{equa:Bi3nomial-np2-1-k}
\binom{np^{2}-1}{k}_{3}\equiv \left \{
\begin{array}{ll}
\hspace{-1mm}1\hspace{1mm}(\mathrm{mod}\hspace{0.3mm}p^{2})&\text{if}\hspace{1.5mm} k\equiv 0\,(\mathrm{mod}\hspace{0.3mm} 4),
\medskip\\
\hspace{-1mm}-1\hspace{1mm}(\mathrm{mod}\hspace{0.3mm}p^{2})&\text{if}\hspace{1.5mm} k\equiv 1\,(\mathrm{mod}\hspace{0.3mm} 4),
\medskip\\
\hspace{-1mm}0\hspace{1mm}(\mathrm{mod}\hspace{0.3mm}p^{2})&\text{if}\hspace{1.5mm} k\equiv 2\,(\mathrm{mod}\hspace{0.3mm} 4),
\medskip\\
\hspace{-1mm}0\hspace{1mm}(\mathrm{mod}\hspace{0.3mm}p^{2})&\text{if}\hspace{1.5mm} k\equiv 3\,(\mathrm{mod}\hspace{0.3mm} 4).
\end{array}
\right.
\end{equation}

}
\end{coralph}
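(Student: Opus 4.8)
The plan is to prove Corollary~\ref{coralph:Bi3nomial-np2-1-k} directly from the generating function, without appealing to Theorem~\ref{thmalph:Bi3nomial-np-1-p-1-p-1-sur-2}; the argument runs exactly parallel to the trinomial case (Corollary~4 of~\cite{EM19arxiv}), with $1+x+x^{2}$ replaced by $1+x+x^{2}+x^{3}$, so that period $3$ becomes period $4$. Since the statement only concerns the coefficients of $x^{0},\dots,x^{p-1}$ in $(1+x+x^{2}+x^{3})^{np^{2}-1}$, it suffices to understand this polynomial modulo the ideal $(p^{2},x^{p})$, i.e.\ in the finite ring $R:=(\mathbb{Z}/p^{2}\mathbb{Z})[x]/(x^{p})$. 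Writing $f(x)=1+x+x^{2}+x^{3}$, which has constant term $1$ and is therefore a unit in $R$, and $g(x)=1+x^{p}+x^{2p}+x^{3p}$, we have $f^{np^{2}-1}=f^{np^{2}}\cdot f^{-1}$, so everything reduces to computing $f^{np^{2}}$ in $R$.

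First I would show $f(x)^{np^{2}}\equiv1\pmod{(p^{2},x^{p})}$. By the multinomial theorem, $f(x)^{p}\equiv g(x)\pmod p$, hence $f(x)^{np}\equiv g(x)^{n}\pmod p$ in $\mathbb{Z}[x]$. Next, invoke the elementary fact that $A\equiv B\pmod p$ in $\mathbb{Z}[x]$ implies $A^{p}\equiv B^{p}\pmod{p^{2}}$ (write $A=B+pC$ and expand $(B+pC)^{p}$: the linear term is $p\cdot B^{p-1}\cdot pC$ and every higher term carries $p^{i}$ with $i\ge2$, so all of them are divisible by $p^{2}$). Applying this with $A=f^{np}$ and $B=g^{n}$ gives $f(x)^{np^{2}}=(f^{np})^{p}\equiv(g^{n})^{p}=g(x)^{np}\pmod{p^{2}}$. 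Finally, every monomial of $g(x)^{np}=(1+x^{p}+x^{2p}+x^{3p})^{np}$ has degree divisible by $p$, and the only one of degree $<p$ is the constant $1$; hence $g(x)^{np}\equiv1\pmod{x^{p}}$, and therefore $f(x)^{np^{2}}\equiv1\pmod{(p^{2},x^{p})}$.

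It remains to invert $f$ and read off coefficients. Over $\mathbb{Z}[[x]]$,
\[
\frac{1}{1+x+x^{2}+x^{3}}=\frac{1-x}{1-x^{4}}=(1-x)\sum_{j\ge0}x^{4j}=\sum_{j\ge0}\bigl(x^{4j}-x^{4j+1}\bigr),
\]
whose coefficient of $x^{k}$ is $1,-1,0,0$ according as $k\equiv0,1,2,3\pmod4$. Reducing this series modulo $(p^{2},x^{p})$ yields $f^{-1}$ in $R$, and combined with $f^{np^{2}-1}\equiv f^{-1}\pmod{(p^{2},x^{p})}$ this gives precisely the four cases of~\eqref{equa:Bi3nomial-np2-1-k}, simultaneously for all $k\in\{0,1,\dots,p-1\}$.

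There is no genuinely hard step here: the single point requiring care is the passage from a congruence modulo $p$ to one modulo $p^{2}$ for $p$-th powers, together with the bookkeeping of the mixed modulus $(p^{2},x^{p})$. Carrying out every manipulation inside the fixed ring $R=(\mathbb{Z}/p^{2}\mathbb{Z})[x]/(x^{p})$ makes the inversion of $f$ and the truncations legitimate. (Note that the hypothesis $p\ge5$ is not actually needed for this corollary; it is kept only for uniformity with Theorem~\ref{thmalph:Bi3nomial-np-1-p-1-p-1-sur-2} and Proposition~\ref{propalph:sommes-Bi3nomial-np-1-k}.)
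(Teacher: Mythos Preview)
Your argument is correct and self-contained. The paper, however, proceeds differently: its proof is the one-liner ``This follows without difficulty from Proposition~\ref{P}.'' The intended meaning is that one replaces $n$ by $np$ in the four congruences of Proposition~\ref{P}; the expressions on the right-hand side all have the shape $\varepsilon + np\cdot(\text{harmonic-type sum})$ with $\varepsilon\in\{1,-1,0\}$, and under $n\mapsto np$ the second term becomes $np^{2}\cdot(\cdots)\equiv 0\pmod{p^{2}}$, leaving exactly the constants $1,-1,0,0$.

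So the two routes are genuinely different. The paper's route is shorter given the surrounding machinery, but it passes through the harmonic-sum identities of Proposition~\ref{P} that are irrelevant to the end result. Your generating-function argument is more structural: the Frobenius-type step $A\equiv B\pmod p\Rightarrow A^{p}\equiv B^{p}\pmod{p^{2}}$ explains directly why the exponent $np^{2}$ forces $f^{np^{2}}\equiv 1$ in $(\mathbb{Z}/p^{2}\mathbb{Z})[x]/(x^{p})$, and then the answer is literally the coefficients of $f^{-1}=(1-x)/(1-x^{4})$. As you note, this avoids any use of $p\ge 5$ and would extend verbatim to the $(r{+}1)$-nomial setting with period $r{+}1$ in place of $4$.
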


\section{Some intermediate results}
\bigskip

\noindent In this section, we give some congruences which will be useful for establishing our main results.
\begin{lem}[\rm{\emph{cf.}} \cite{Glai00Quart2,Wil91}]
{\sl Let $p$ be a prime number. We have
\begin{align}
H_{[ p/2]}& \equiv-2q_{p}(2)\hspace{1mm}(\mathrm{mod}\hspace{0.3mm}p),\,p\geq3,\label{Harmonic2}\\
H_{[p/4]} & \equiv-3q_{p}(2)\hspace{1mm}(\mathrm{mod}\hspace{0.3mm}p),\,p\geq5,\label{Harmonic4}\\
H_{[p/8]} & \equiv-4q_{p}(2)-2\chi_{p}\hspace{1mm}(\mathrm{mod}\hspace{0.3mm}p),\,p\geq5,\label{Harmonic8}
\end{align}
where $(H_{n})_n$ is the harmonic sequence defined by $H_{0}=0$ and $H_{n}=1+\frac{1}{2}+\cdots+\frac{1}{n}.$
}
\end{lem}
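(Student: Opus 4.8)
The plan is to deduce all three congruences from the single classical identity $2q_p(2)\equiv-H_{[p/2]}\pmod p$ together with two elementary "folding'' bijections on residues mod $p$, isolating the Pell quotient as the only genuinely new input (drawn from the cited papers) needed for \eqref{Harmonic8}. For \eqref{Harmonic2}: for $1\le k\le p-1$ one has $\binom pk=\frac pk\binom{p-1}{k-1}$ with $\binom{p-1}{k-1}=\prod_{i=1}^{k-1}\frac{p-i}{i}\equiv(-1)^{k-1}\pmod p$, hence $\binom pk\equiv(-1)^{k-1}\frac pk\pmod{p^2}$; expanding $2^p=(1+1)^p=2+\sum_{k=1}^{p-1}\binom pk$ and dividing by $p$ gives $2q_p(2)\equiv\sum_{k=1}^{p-1}\frac{(-1)^{k-1}}{k}\pmod p$. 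Splitting the alternating sum by the parity of $k$ yields $\sum_{k=1}^{p-1}\frac{(-1)^{k-1}}{k}=H_{p-1}-H_{[p/2]}$, and since $H_{p-1}\equiv0\pmod p$ (pair $k$ with $p-k$) we obtain $H_{[p/2]}\equiv-2q_p(2)\pmod p$.

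For \eqref{Harmonic4}: working in $\mathbb{F}_p$, the substitution $k\mapsto m=p-2k$ is a bijection of $\{k:\ p/4<k\le p/2\}$ onto $\{m:\ 0<m<p/2,\ m\ \mathrm{odd}\}$, under which $\frac1k=\frac2{p-m}\equiv-\frac2m$, so that $H_{[p/2]}-H_{[p/4]}=\sum_{p/4<k\le p/2}\frac1k\equiv-2\sum_{\substack{0<m<p/2\\ m\ \mathrm{odd}}}\frac1m\pmod p$. Writing $H_{2L}=\sum_{l=1}^{L}\frac1{2l-1}+\tfrac12H_L$, where $L$ is the number of odd integers in $(0,p/2)$, and checking that the small discrepancy between $(2L,L)$ and $([p/2],[p/4])$ that occurs when $p\equiv3\pmod4$ contributes only cancelling multiples of $p$, one finds $\sum_{0<m<p/2,\ m\ \mathrm{odd}}\frac1m\equiv H_{[p/2]}-\tfrac12H_{[p/4]}\pmod p$. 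Substituting gives $3H_{[p/2]}\equiv2H_{[p/4]}\pmod p$, whence $H_{[p/4]}\equiv\tfrac32H_{[p/2]}\equiv-3q_p(2)\pmod p$ (here $p\ge5$ is used to invert $2$ and $3$).

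For \eqref{Harmonic8}: the analogous substitution $k\mapsto m=p-4k$ is a bijection of $\{k:\ p/8<k\le p/4\}$ onto $\{m:\ 0<m<p/2,\ m\equiv p\pmod4\}$, with $\frac1k\equiv-\frac4m$, so $H_{[p/4]}-H_{[p/8]}\equiv-4S\pmod p$, where $S:=\sum_{0<m<p/2,\ m\equiv p\,(4)}\frac1m$. Combining this with \eqref{Harmonic4} reduces the statement to the single evaluation $S\equiv-\tfrac14q_p(2)-\tfrac12\chi_p\pmod p$. This last step is the crux and the main obstacle: in contrast with the sums above, the progression-restricted sum $S$ is not expressible through $q_p(2)$ alone, and the Pell quotient enters through the arithmetic of $\mathbb{Z}[\sqrt2]$—concretely, through the Frobenius relation $(1+\sqrt2)^p\equiv1+\big(\tfrac2p\big)\sqrt2\pmod p$ and the associated defect $(1+\sqrt2)^{\,p-(2/p)}=Q+p\chi_p\sqrt2$ with $Q\equiv1\pmod p$. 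Rather than reprove it, I would quote this evaluation directly from Glaisher \cite{Glai00Quart2} and Williams \cite{Wil91}, where precisely a closed form for $\sum_{0<k<p/8}1/k$ (equivalently, for $S$) modulo $p$ is established; the normalization is confirmed by the cases $p=7,11$.
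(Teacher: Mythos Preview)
Your argument is correct, and it goes well beyond what the paper actually does: the paper does not prove this lemma at all but simply attributes \eqref{Harmonic2}--\eqref{Harmonic4} to Glaisher and \eqref{Harmonic8} to Williams, with page references. Your self-contained derivations of \eqref{Harmonic2} (via the mod-$p^{2}$ expansion of $(1+1)^{p}$) and of \eqref{Harmonic4} (via the folding bijection $k\mapsto p-2k$) are clean and standard. One small wording point: in the case $p\equiv3\pmod4$ the extra contributions $\tfrac{2}{p+1}$ and $\tfrac12\cdot\tfrac{4}{p+1}$ cancel \emph{identically} as rational numbers, not merely modulo~$p$, so there is in fact no discrepancy to track.

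For \eqref{Harmonic8} your reduction is sound but does not genuinely shorten the path: since $H_{[p/4]}-H_{[p/8]}\equiv-4S\pmod p$ and you already know $H_{[p/4]}$, evaluating $S$ modulo $p$ is literally equivalent to evaluating $H_{[p/8]}$ modulo $p$. So in the end you, like the paper, are quoting the Glaisher/Williams formula; the folding identity just repackages the quantity that must be cited. The added value of your write-up over the paper's bare citation is that it makes transparent \emph{why} the Pell quotient is the new ingredient here---the progression $m\equiv p\pmod4$ in $(0,p/2)$ brings in the arithmetic of $\mathbb{Z}[\sqrt{2}]$---whereas the paper offers no such explanation.
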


\noindent Congruences~(\ref{Harmonic2})~and~(\ref{Harmonic4}) are due to \textsc{Glaisher}~\cite[pages~21-23]{Glai00Quart2}. Congruence~(\ref{Harmonic8}) is derived from a paper of \textsc{Williams}~\cite[page~440]{Wil91}.


\begin{lem}
{\sl Let $p\geq5$ be a prime number.

\noindent\textbf{\textit{1.}} If $p\equiv 1\,(\mathrm{mod}\hspace{0.3mm} 4)$ then
\begin{small}
\begin{align}\label{equa:sum-p=1mod4}
&\hspace{3mm}\sum\limits_{k=0}^{\frac{p-5}{4}}\frac{1}{4k+1}\equiv\frac{3}{4}q_{p}(2)\,(\mathrm{mod}\hspace{0.3mm} p),&&
\sum\limits_{k=0}^{\frac{p-1}{4}}\frac{1}{4k+2}\equiv1-\frac{1}{4}q_{p}(2)\,(\mathrm{mod}\hspace{0.3mm} p),&&
\sum\limits_{k=0}^{\frac{p-1}{4}}\frac{1}{4k+3}\equiv\frac{1}{2}+\frac{1}{4}q_{p}(2)\,(\mathrm{mod}\hspace{0.3mm} p).
\end{align}
\end{small}

\noindent\textbf{\textit{2.}} If $p\equiv 3\,(\mathrm{mod}\hspace{0.3mm} 4)$ then
\begin{small}
\begin{align}\label{equa:sum-p=3mod4}
&\sum\limits_{k=0}^{\frac{p-3}{4}}\frac{1}{4k+1}\equiv\frac{1}{4}q_{p}(2)\,(\mathrm{mod}\hspace{0.3mm} p),&&
\sum\limits_{k=0}^{\frac{p-3}{4}}\frac{1}{4k+2}\equiv-\frac{1}{4}q_{p}(2)\,(\mathrm{mod}\hspace{0.3mm} p),&&
\sum\limits_{k=0}^{\frac{p-7}{4}}\frac{1}{4k+3}\equiv\frac{3}{4}q_{p}(2)\,(\mathrm{mod}\hspace{0.3mm} p).
\end{align}
\end{small}
}
\end{lem}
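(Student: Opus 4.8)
The plan is to evaluate all six sums at once by decomposing the harmonic sum $\sum_{i=1}^{p-1}\frac1i$ according to the residue of $i$ modulo $4$, feeding in the harmonic congruences \eqref{Harmonic2} and \eqref{Harmonic4} together with the reflection $i\mapsto p-i$. Throughout, write $p=4m+\varepsilon$ with $\varepsilon\in\{1,3\}$, and for $r\in\{0,1,2,3\}$ let $C_r$ denote the sum of $\frac1i$ over the integers $i\in\{1,\dots,p-1\}$ with $i\equiv r\pmod4$; these rationals have denominators prime to $p$, so may be read modulo $p$, and they satisfy $C_0+C_1+C_2+C_3=H_{p-1}\equiv0\pmod p$.

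First I would determine the two ``even'' class sums directly. The multiples of $4$ in $\{1,\dots,p-1\}$ are $4,8,\dots,4[p/4]$, so $C_0=\frac14H_{[p/4]}$; the even integers in $\{1,\dots,p-1\}$ are $2,4,\dots,2[p/2]$, so $C_0+C_2=\frac12H_{[p/2]}$. By \eqref{Harmonic4} and \eqref{Harmonic2} this gives $C_0\equiv-\frac34q_p(2)$ and $C_2\equiv\frac12H_{[p/2]}-\frac14H_{[p/4]}\equiv-\frac14q_p(2)\pmod p$.

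Next I would recover $C_1$ and $C_3$ via the involution $i\mapsto p-i$ of $\{1,\dots,p-1\}$, under which $\frac1i\equiv-\frac1{p-i}\pmod p$ while the residue class $r$ modulo $4$ is carried bijectively onto the class $\varepsilon-r$; hence $C_r\equiv-C_{\varepsilon-r}\pmod p$ for every $r$. When $p\equiv1\pmod4$ this gives $C_1\equiv-C_0\equiv\frac34q_p(2)$ and $C_3\equiv-C_2\equiv\frac14q_p(2)$; when $p\equiv3\pmod4$ it gives $C_1\equiv-C_2\equiv\frac14q_p(2)$ and $C_3\equiv-C_0\equiv\frac34q_p(2)$ (and in both cases $C_0+C_1+C_2+C_3\equiv0$, as it should). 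Finally I would match each $C_r$ with the sum actually occurring in the statement: an inspection of the summation ranges shows that for $p\equiv3\pmod4$ the three sums in \eqref{equa:sum-p=3mod4} are precisely $C_1,C_2,C_3$, whereas for $p\equiv1\pmod4$ the first sum in \eqref{equa:sum-p=1mod4} equals $C_1$, the second equals $C_2+\frac1{p+1}\equiv C_2+1$, and the third equals $C_3+\frac1{p+2}\equiv C_3+\frac12$; substituting the values found above produces the asserted congruences.

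I do not expect a real difficulty here: the whole argument reduces to the two harmonic congruences already granted and to the trivial reflection $i\mapsto p-i$. The only point needing care is the index bookkeeping — determining exactly where each of the six truncated sums stops relative to $p-1$, and tracking the stray terms $\frac1{p+1}\equiv1$ and $\frac1{p+2}\equiv\frac12$ that appear because two of the upper limits in part~1 of the lemma run just past $p-1$ — but this is entirely mechanical.
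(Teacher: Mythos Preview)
Your argument is correct. The paper uses exactly the same two ingredients --- the harmonic congruences \eqref{Harmonic2}, \eqref{Harmonic4} and the reflection $i\mapsto p-i$ --- but applies them to each of the six sums separately: for every sum it first performs an explicit index substitution (for instance $\sum_{k=0}^{(p-5)/4}\frac{1}{4k+1}=\sum_{j=1}^{(p-1)/4}\frac{1}{p-4j}$), then reduces modulo $p$ to reach an expression in $H_{[p/2]}$ and $H_{[p/4]}$. Your organization via the residue-class sums $C_0,C_1,C_2,C_3$ is more uniform: you compute $C_0$ and $C_2$ directly from the harmonic congruences and then obtain $C_1$ and $C_3$ by a single invocation of the reflection, treating all six target sums at once and isolating the two stray terms $\tfrac{1}{p+1},\tfrac{1}{p+2}$ only at the end. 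The mathematical content is identical; your packaging simply shortens the bookkeeping and makes the symmetry $C_r\equiv -C_{\varepsilon-r}$ explicit rather than implicit.
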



\begin{proof}


\textbf{\textit{i.}} Assume that $p\equiv 1\,(\mathrm{mod}\hspace{0.3mm} 4).$ We have
\begin{fleqn}[1.5cm]
\begin{flalign*}
\sum \limits_{k=0}^{ \frac{p-5}{4} }\frac{1}{4k+1}&=\sum\limits_{j=1}^{\frac{p-1}{4}}\frac{1}{4( \frac{p-1}{4}-j)+1}=\sum\limits_{j=1}^{\frac{p-1}{4}}\frac{1}{p-4j},\\
\sum \limits_{k=0}^{ \frac{p-1}{4} }\frac{1}{4k+2}&=\sum\limits_{j=\frac{p-1}{4}}^{\frac{p-1}{2}}\frac{1}{4( \frac{p-1}{2}-j)+2}=\sum\limits_{j=\frac{p-1}{4}}^{\frac{p-1}{2}}\frac{1}{2p-4j}
\end{flalign*}
\end{fleqn}
{\fontsize{11}{11pt}\text{and}}
\begin{fleqn}[1.5cm]
\begin{flalign*}
\sum \limits_{k=0}^{ \frac{p-1}{4} }\frac{1}{4k+3}&=\sum\limits_{j=\frac{p+3}{4}}^{\frac{p+1}{2}}\frac{1}{4( j- \frac{p+3}{4})+3}=\sum\limits_{j=\frac{p+3}{4}}^{\frac{p+1}{2}}\frac{1}{4j-p},
\end{flalign*}
\end{fleqn}
so that
\begin{fleqn}[1.5cm]
\begin{flalign*}
\sum \limits_{k=0}^{ \frac{p-5}{4} }\frac{1}{4k+1}&\equiv-\frac{1}{4}\sum \limits_{j=1}^{\frac{p-1}{4}}\frac{1}{j}
=-\frac{1}{4}H_{[\frac{p}{4}]}=\frac{3}{4}q_{p}(2)\,(\mathrm{mod}\hspace{0.3mm} p),\\
\sum \limits_{k=0}^{ \frac{p-1}{4} }\frac{1}{4k+2}&\equiv -\frac{1}{4}\sum \limits_{j=\frac{p-1}{4}}^{\frac{p-1}{2}}\frac{1}{j} =-\frac{1}{p-1}-\frac{1}{4}\sum\limits_{j=1}^{\frac{p-1}{2}}\frac{1}{j}+\frac{1}{4}\sum\limits_{j=1}^{\frac{p-1}{4}}\frac{1}{j}\\
&\equiv1-\frac{1}{4}\sum\limits_{j=1}^{\frac{p-1}{2}}\frac{1}{j}+\frac{1}{4}\sum\limits_{j=1}^{\frac{p-1}{4}}\frac{1}{j}=1-\frac{1}{4}H_{[\frac{p}{2}]}+\frac{1}{4}H_{[\frac{p}{4}]}=1-\frac{1}{4}q_{p}(2)\,(\mathrm{mod}\hspace{0.3mm} p)
\end{flalign*}
\end{fleqn}
{\fontsize{11}{11pt}\text{and}}
\begin{fleqn}[1.5cm]
\begin{flalign*}
\sum \limits_{k=0}^{ \frac{p-1}{4} }\frac{1}{4k+3}&\equiv\frac{1}{4}\sum \limits_{j=\frac{p+3}{4}}^{\frac{p+1}{2}}\frac{1}{j} =\frac{1}{2p+2}+\frac{1}{4}\sum\limits_{j=1}^{\frac{p-1}{2}}\frac{1}{j}-\frac{1}{4}\sum\limits_{j=1}^{\frac{p-1}{4}}\frac{1}{j}\\
&\equiv\frac{1}{2}+\frac{1}{4}\sum\limits_{j=1}^{\frac{p-1}{2}}\frac{1}{j}-\frac{1}{4}\sum\limits_{j=1}^{\frac{p-1}{4}}\frac{1}{j}=\frac{1}{2}+\frac{1}{4}H_{[\frac{p}{2}]}-\frac{1}{4}H_{[\frac{p}{4}]}=\frac{1}{2}+\frac{1}{4}q_{p}(2)\,(\mathrm{mod}\hspace{0.3mm} p).
\end{flalign*}
\end{fleqn}


\textbf{\textit{ii.}} Assume now that $p\equiv 3\,(\mathrm{mod}\hspace{0.3mm} 4).$ From the equalities
\begin{fleqn}[1.5cm]
\begin{flalign*}
\sum \limits_{k=0}^{ \frac{p-3}{4} }\frac{1}{4k+1}&=\sum\limits_{j=\frac{p+1}{4}}^{\frac{p-1}{2}}\frac{1}{4(j- \frac{p-3}{4}-1)+1}=\sum\limits_{j=\frac{p+1}{4}}^{\frac{p-1}{2}}\frac{1}{4j-p},\\
\sum \limits_{k=0}^{ \frac{p-3}{4} }\frac{1}{4k+2}&=\sum\limits_{j=\frac{p+1}{4}}^{\frac{p-1}{2}}\frac{1}{4( \frac{p-1}{2}-j)+2}=\sum\limits_{j=\frac{p+1}{4}}^{\frac{p-1}{2}}\frac{1}{2p-4j}
\end{flalign*}
\end{fleqn}
{\fontsize{11}{11pt}\text{and}}
\begin{fleqn}[1.5cm]
\begin{flalign*}
\sum \limits_{k=0}^{ \frac{p-7}{4} }\frac{1}{4k+3}&=\sum\limits_{j=\frac{p+3}{4}}^{\frac{p+1}{2}}\frac{1}{4( \frac{p-7}{4}-j+1)+3}=\sum\limits_{j=1}^{\frac{p-3}{4}}\frac{1}{p-4j},
\end{flalign*}
\end{fleqn}
we deduce the congruences
\begin{fleqn}[1.5cm]
\begin{flalign*}
\sum \limits_{k=0}^{ \frac{p-3}{4} }\frac{1}{4k+1}&\equiv\frac{1}{4}\sum \limits_{j=\frac{p+1}{4}}^{\frac{p-1}{2}}\frac{1}{j}
=\frac{1}{4}H_{[\frac{p}{2}]}-\frac{1}{4}H_{[\frac{p}{4}]}=\frac{1}{4}q_{p}(2)\,(\mathrm{mod}\hspace{0.3mm} p),\\
\sum \limits_{k=0}^{ \frac{p-3}{4} }\frac{1}{4k+2}&\equiv -\frac{1}{4}\sum \limits_{j=\frac{p+1}{4}}^{\frac{p-1}{2}}\frac{1}{j} =-\frac{1}{4}H_{[\frac{p}{2}]}+\frac{1}{4}H_{[\frac{p}{4}]}=-\frac{1}{4}q_{p}(2)\,(\mathrm{mod}\hspace{0.3mm} p)
\end{flalign*}
\end{fleqn}
{\fontsize{11}{11pt}\text{and}}
\begin{fleqn}[1.5cm]
\begin{flalign*}
\sum \limits_{k=0}^{ \frac{p-7}{4}}\frac{1}{4k+3}&\equiv-\frac{1}{4}\sum \limits_{j=1}^{\frac{p-3}{4}}\frac{1}{j} =-\frac{1}{4}H_{[\frac{p}{4}]}=\frac{3}{4}q_{p}(2)\,(\mathrm{mod}\hspace{0.3mm} p).
\end{flalign*}
\end{fleqn}

\end{proof}


\begin{lem}
{\sl Let $p\geq5$ be a prime number.

\noindent\textbf{\textit{1.}} If $p\equiv 1\,(\mathrm{mod}\hspace{0.3mm} 8)$ then
\begin{SMALL}
\begin{align}\label{equa:sum-p=1mod8}
&&\sum\limits_{k=0}^{\frac{p-1}{8}}\frac{1}{4k+1}\equiv2-\frac{1}{4}q_{p}(2)-\frac{1}{2}\chi_{p}\,(\mathrm{mod}\hspace{0.3mm} p),
&&\sum\limits_{k=0}^{\frac{p-1}{8}}\frac{1}{4k+2}\equiv \frac{2}{3}-\frac{1}{2}q_{p}(2)+\frac{1}{2}\chi_{p}\,(\mathrm{mod}\hspace{0.3mm} p),
&&\sum\limits_{k=0}^{\frac{p-1}{8}}\frac{1}{4k+3}\equiv\frac{2}{5}-\frac{1}{4}q_{p}(2)+\frac{1}{2}\chi_{p}\,(\mathrm{mod}\hspace{0.3mm} p).
\end{align}
\end{SMALL}

\noindent\textbf{\textit{2.}} If $p\equiv 3\,(\mathrm{mod}\hspace{0.3mm} 8)$ then
\begin{SMALL}
\begin{align}\label{equa:sum-p=3mod8}
&\sum\limits_{k=0}^{\frac{p-3}{8}}\frac{1}{4k+1}\equiv -\frac{1}{4}q_{p}(2)+\frac{1}{2}\chi_{p}\,(\mathrm{mod}\hspace{0.3mm} p),&&
\sum\limits_{k=0}^{\frac{p-3}{8}}\frac{1}{4k+2}\equiv 2-\frac{1}{2}q_{p}(2)+\frac{1}{2}\chi_{p}\,(\mathrm{mod}\hspace{0.3mm} p),&&
\sum\limits_{k=0}^{\frac{p-3}{8}}\frac{1}{4k+3}\equiv \frac{2}{3}-\frac{1}{4}q_{p}(2)-\frac{1}{2}\chi_{p}\,(\mathrm{mod}\hspace{0.3mm} p).
\end{align}
\end{SMALL}

\noindent\textbf{\textit{3.}} If $p\equiv 5\,(\mathrm{mod}\hspace{0.3mm} 8)$ then
\begin{SMALL}
\begin{align}\label{equa:sum-p=5mod8}
&\sum\limits_{k=0}^{\frac{p-5}{8}}\frac{1}{4k+1}\equiv -\frac{1}{4}q_{p}(2)-\frac{1}{2}\chi_{p}\,(\mathrm{mod}\hspace{0.3mm} p),&&
\sum\limits_{k=0}^{\frac{p-5}{8}}\frac{1}{4k+2}\equiv -\frac{1}{2}q_{p}(2)+\frac{1}{2}\chi_{p}\,(\mathrm{mod}\hspace{0.3mm} p),&&
\sum\limits_{k=0}^{\frac{p-5}{8}}\frac{1}{4k+3}\equiv 2-\frac{1}{4}q_{p}(2)+\frac{1}{2}\chi_{p}\,(\mathrm{mod}\hspace{0.3mm} p).
\end{align}
\end{SMALL}

\noindent\textbf{\textit{4.}} If $p\equiv 7\,(\mathrm{mod}\hspace{0.3mm} 8)$ then
\begin{SMALL}
\begin{align}\label{equa:sum-p=7mod8}
&\sum\limits_{k=0}^{\frac{p-7}{8}}\frac{1}{4k+1}\equiv -\frac{1}{4}q_{p}(2)+\frac{1}{2}\chi_{p}\,(\mathrm{mod}\hspace{0.3mm} p),&&
\sum\limits_{k=0}^{\frac{p-7}{8}}\frac{1}{4k+2}\equiv -\frac{1}{2}q_{p}(2)+\frac{1}{2}\chi_{p}\,(\mathrm{mod}\hspace{0.3mm} p),&&
\sum\limits_{k=0}^{\frac{p-7}{8}}\frac{1}{4k+3}\equiv -\frac{1}{4}q_{p}(2)-\frac{1}{2}\chi_{p}\,(\mathrm{mod}\hspace{0.3mm} p).
\end{align}
\end{SMALL}

}
\end{lem}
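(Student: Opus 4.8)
\noindent The plan is to push the reindexing argument of the previous lemma one step further: in~(\ref{equa:sum-p=1mod4})--(\ref{equa:sum-p=3mod4}) the partial harmonic sums that arose had endpoints near $p/2$ and $p/4$, whereas here they will reach down to $p/8$, so in addition to~(\ref{Harmonic2}) and~(\ref{Harmonic4}) we shall also need~(\ref{Harmonic8}). All congruences below are modulo $p$. Set $N:=[p/8]$, so that $N=\tfrac{p-1}{8},\,\tfrac{p-3}{8},\,\tfrac{p-5}{8},\,\tfrac{p-7}{8}$ according as $p\equiv1,3,5,7\,(\mathrm{mod}\hspace{0.3mm}8)$, and write
\[
S_r:=\sum_{k=0}^{N}\frac{1}{4k+r}\qquad(r=1,2,3),
\]
which are exactly the three sums appearing in~(\ref{equa:sum-p=1mod8})--(\ref{equa:sum-p=7mod8}).

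First I would record two identities valid in all four cases. Collecting $\tfrac1m$ for $1\le m\le 4N+3$ according to the residue of $m$ modulo $4$ gives
\[
S_1+S_2+S_3+\tfrac14H_N=\sum_{m=1}^{4N+3}\frac1m ;
\]
since $4N+3$ exceeds $[p/2]=\tfrac{p-1}{2}$ by $3,2,1,0$ respectively, the right-hand side is $H_{[p/2]}$ plus the explicit boundary terms $\tfrac{1}{(p-1)/2+i}\equiv\tfrac{2}{2i-1}$, and, remembering $N=[p/8]$, this expresses $S_1+S_2+S_3$ through $H_{[p/2]}$, $H_{[p/8]}$ and a rational constant. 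Next, pulling the factor $2$ out of $S_2$ and separating odd from even denominators,
\[
S_2=\tfrac12\Bigl(\sum_{m=1}^{2N+1}\frac1m-\tfrac12H_N\Bigr),
\]
where $2N+1$ equals $[p/4]$ for $p\equiv5,7\,(\mathrm{mod}\hspace{0.3mm}8)$ and $[p/4]+1$ for $p\equiv1,3\,(\mathrm{mod}\hspace{0.3mm}8)$; thus $\sum_{m=1}^{2N+1}\tfrac1m$ is $H_{[p/4]}$ up to one explicit boundary term, and $S_2$ is pinned down in terms of $H_{[p/4]}$, $H_{[p/8]}$ and a constant.

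Then I would evaluate \emph{one} of $S_1,S_3$ directly by a reflection. If $p\equiv1\,(\mathrm{mod}\hspace{0.3mm}4)$, the substitution $j:=\tfrac{p-1}{4}-k$ is a bijection of $\{0,\dots,N\}$ onto an interval of positive integers with upper endpoint $[p/4]$ and lower endpoint $[p/8]$ (when $p\equiv1\,(\mathrm{mod}\hspace{0.3mm}8)$) or $[p/8]+1$ (when $p\equiv5\,(\mathrm{mod}\hspace{0.3mm}8)$), and under it $4k+1=p-4j$, so
\[
S_1\equiv-\tfrac14\sum_{j}\frac1j\equiv-\tfrac14\bigl(H_{[p/4]}-H_{[p/8]}+\delta\bigr),
\]
with $\delta=\tfrac{1}{[p/8]}$ if $p\equiv1\,(\mathrm{mod}\hspace{0.3mm}8)$ and $\delta=0$ if $p\equiv5\,(\mathrm{mod}\hspace{0.3mm}8)$; symmetrically, if $p\equiv3\,(\mathrm{mod}\hspace{0.3mm}4)$ the substitution $j:=\tfrac{p-3}{4}-k$, under which $4k+3=p-4j$, yields the same shape of formula for $S_3$ (with $\delta=\tfrac{1}{[p/8]}$ if $p\equiv3\,(\mathrm{mod}\hspace{0.3mm}8)$ and $\delta=0$ if $p\equiv7\,(\mathrm{mod}\hspace{0.3mm}8)$). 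The remaining one of the three sums is then obtained by subtracting $S_2$ and the directly evaluated sum from the first identity of the previous paragraph. Substituting~(\ref{Harmonic2}), (\ref{Harmonic4}), (\ref{Harmonic8}) and collecting the rational constants yields precisely~(\ref{equa:sum-p=1mod8})--(\ref{equa:sum-p=7mod8}).

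There is no conceptual novelty beyond the previous lemma; the only thing that needs care is the bookkeeping across the four residue classes of $p$ modulo $8$ -- picking each reflection so that the resulting partial harmonic sum has its endpoints \emph{exactly} at $[p/8]$ and $[p/4]$, and then deciding, case by case, whether a given endpoint lies inside or just outside the range of summation, which is what produces the unit corrections $\tfrac{1}{[p/8]}$, $\tfrac{1}{[p/4]+1}$ and $\tfrac{1}{(p-1)/2+i}$, each a nonzero residue modulo $p$. Once these ranges are pinned down, only routine simplification of the Fermat and Pell quotients remains.
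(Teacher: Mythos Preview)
Your argument is correct and rests on the same ingredient as the paper---reindexing the sums so that they reduce to partial harmonic numbers with endpoints at $[p/2]$, $[p/4]$, $[p/8]$, then invoking (\ref{Harmonic2})--(\ref{Harmonic8}). The packaging, however, is different. The paper treats each of the twelve sums (three sums in each of four residue classes) by a separate ad~hoc substitution, sometimes feeding the result for one sum back into the computation of another (e.g.\ it re-uses the $1/(4j+2)$ congruence from~(\ref{equa:sum-p=1mod4}) when handling $S_3$ for $p\equiv1\pmod8$). You instead isolate two identities that hold uniformly in all four cases,
\[
S_1+S_2+S_3+\tfrac14H_N=H_{4N+3},\qquad S_2=\tfrac12\bigl(H_{2N+1}-\tfrac12H_N\bigr),
\]
and then perform only \emph{one} reflection per residue class (on $S_1$ if $p\equiv1\pmod4$, on $S_3$ if $p\equiv3\pmod4$), recovering the third sum by subtraction. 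This cuts the twelve substitutions down to four, and the case distinctions become purely a matter of deciding whether the endpoints $4N+3$, $2N+1$, and the reflection image of $0$ land exactly on or one step past $[p/2]$, $[p/4]$, $[p/8]$ respectively---precisely the $\delta$-corrections you record. The upshot is the same formulas with less repetition; the paper's version is more explicit but correspondingly longer.
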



\begin{proof}
\textbf{\textit{i.}} For $p\equiv 1\,(\mathrm{mod}\hspace{0.3mm} 8),$ we have
\begin{fleqn}[1cm]
\begin{flalign*}
\sum \limits_{k=0}^{ \frac{p-1}{8} }\frac{1}{4k+1}=&\sum\limits_{j=\frac{p-1}{8}}^{\frac{p-1}{4}}\frac{1}{4( \frac{p-1}{4}-j)+1}=\sum\limits_{j=\frac{p-1}{8}}^{\frac{p-1}{4}}\frac{1}{p-4j},\\
\sum\limits_{k=0}^{\frac{p-1}{8}}\frac{1}{4k+2}=&\frac{2}{p+3}+\sum\limits_{j=1}^{\frac{p-1}{4}}\frac{1}{2j}-\sum\limits_{j=1}^{\frac{p-1}{8}}\frac{1}{4j}
\end{flalign*}
\end{fleqn}
{\fontsize{11}{11pt}\text{and}}
\begin{fleqn}[1cm]
\begin{flalign*}
\sum \limits_{k=0}^{ \frac{p-1}{8} }\frac{1}{4k+3}=&\sum\limits_{j=\frac{p-9}{8}}^{\frac{p-5}{4}}\frac{1}{4\big( \frac{p-5}{4}-j\big)+3}=\sum\limits_{j=\frac{p-1}{9}}^{\frac{p-5}{4}}\frac{1}{p-2-4j},\\
\end{flalign*}
\end{fleqn}
so that
\begin{fleqn}[1cm]
\begin{flalign*}
\sum \limits_{k=0}^{ \frac{p-1}{8} }\frac{1}{4k+1}\equiv& -\frac{1}{4}\sum \limits_{j=\frac{p-1}{8}}^{\frac{p-1}{4}}\frac{1}{j} =\frac{2}{p-1}-\frac{1}{4}\sum\limits_{j=1}^{\frac{p-1}{4}}\frac{1}{j}+\frac{1}{4}\sum\limits_{j=1}^{\frac{p-1}{8}}\frac{1}{j}\\
\equiv&2-\frac{1}{4}\sum\limits_{j=1}^{\frac{p-1}{4}}\frac{1}{j}+\frac{1}{4}\sum\limits_{j=1}^{\frac{p-1}{8}}\frac{1}{j}=2-\frac{1}{4}H_{[\frac{p}{2}]}+\frac{1}{4}H_{[\frac{p}{8}]}=2-\frac{1}{4}q_{p}(2)-\frac{1}{2}\chi_{p}\,(\mathrm{mod}\hspace{0.3mm} p),\\
\sum\limits_{k=0}^{\frac{p-1}{8}}\frac{1}{4k+2}\equiv&\frac{2}{3}+\frac{1}{2}\sum\limits_{j=1}^{\frac{p-1}{4}}\frac{1}{j}-\frac{1}{4}\sum\limits_{j=1}^{\frac{p-1}{8}}\frac{1}{j} =\frac{2}{3}+\frac{1}{2}H_{[\frac{p}{4}]}-\frac{1}{4}H_{[\frac{p}{8}]}=\frac{2}{3}-\frac{1}{2}q_{p}(2)+\frac{1}{2}\chi_{p}\,(\mathrm{mod}\hspace{0.3mm}p)
\end{flalign*}
\end{fleqn}
\hspace{0cm}{\fontsize{11}{11pt}\text{and, using the second congruence in~(\ref{equa:sum-p=1mod4}),}}
\begin{fleqn}[1cm]
\begin{align*}
\sum \limits_{k=0}^{ \frac{p-1}{8} }\frac{1}{4k+3}\equiv& -\sum \limits_{j=\frac{p-9}{8}}^{\frac{p-5}{4}}\frac{1}{-4j-2}
=-\sum\limits_{j=0}^{\frac{p-1}{4}}\frac{1}{4j+2}+\sum\limits_{j=0}^{\frac{p-1}{8}}\frac{1}{4j+2}+\frac{1}{p+1}-\frac{2}{p-5}-\frac{p+3}{2}\\
\equiv&-\frac{1}{4}\sum\limits_{j=0}^{\frac{p-1}{4}}\frac{1}{4j+2}+\sum\limits_{j=0}^{\frac{p-1}{8}}\frac{1}{4j+2}+\frac{11}{15}
=\frac{2}{5}+\frac{1}{4}H_{[\frac{p}{2}]}+\frac{1}{4}H_{[\frac{p}{4}]}-\frac{1}{4}H_{[\frac{p}{8}]}\\
&=\frac{2}{5}-\frac{1}{4}q_{p}(2)+\frac{1}{2}\chi_{p}\,(\mathrm{mod}\hspace{0.3mm}p).
\end{align*}
\end{fleqn}

\textbf{\textit{ii.}} Assume that $p\equiv 3\,(\mathrm{mod}\hspace{0.3mm} 8).$ From the equalities
\begin{fleqn}[1cm]
\begin{flalign*}
\sum \limits_{k=0}^{ \frac{p-3}{8} }\frac{1}{4k+1}=&\sum\limits_{j=\frac{p-3}{8}}^{\frac{p-3}{4}}\frac{1}{4( \frac{p-7}{4}-j+1)+1}=\sum\limits_{j=\frac{p-3}{8}}^{\frac{p-3}{4}}\frac{1}{p-4j-2},\\
\sum\limits_{k=0}^{\frac{p-3}{8}}\frac{1}{4k+2}=&\frac{2}{p+1}+\frac{1}{2}\sum\limits_{j=1}^{\frac{p-3}{4}}\frac{1}{j}-\frac{1}{4}\sum\limits_{j=1}^{\frac{p-3}{8}}\frac{1}{j}
\end{flalign*}
\end{fleqn}
\hspace{0cm}{\fontsize{11}{11pt}\text{and}}
\begin{fleqn}[1cm]
\begin{flalign*}
\sum \limits_{k=0}^{ \frac{p-3}{8} }\frac{1}{4k+3}=&\sum\limits_{j=\frac{p-3}{8}}^{\frac{p-3}{4}}\frac{1}{4\big( \frac{p-3}{4}-j\big)+3}=\sum\limits_{j=\frac{p-3}{8}}^{\frac{p-3}{4}}\frac{1}{p-4j},\\
\end{flalign*}
\end{fleqn}
we obtain the congruences
\begin{fleqn}[1cm]
\begin{flalign*}
\sum \limits_{k=0}^{ \frac{p-3}{8} }\frac{1}{4k+3}\equiv& -\frac{1}{4}\sum \limits_{j=\frac{p-3}{8}}^{\frac{p-3}{4}}\frac{1}{j} =\frac{2}{p-3}-\frac{1}{4}\sum\limits_{j=1}^{\frac{p-3}{4}}\frac{1}{j}+\frac{1}{4}\sum\limits_{j=1}^{\frac{p-3}{8}}\frac{1}{j}\\
\equiv&\frac{2}{3}-\frac{1}{4}\sum\limits_{j=1}^{\frac{p-1}{4}}\frac{1}{j}+\sum\limits_{j=1}^{\frac{p-1}{8}}\frac{1}{j}
=\frac{2}{3}-\frac{1}{4}H_{[\frac{p}{4}]}+\frac{1}{4}H_{[\frac{p}{8}]}=\frac{2}{3}-\frac{1}{4}q_{p}(2)-\frac{1}{2}\chi_{p}\,(\mathrm{mod}\hspace{0.3mm}p)
,\\
\sum\limits_{k=0}^{\frac{p-3}{8}}\frac{1}{4k+2}\equiv&2+\frac{1}{2}\sum\limits_{j=1}^{\frac{p-3}{4}}\frac{1}{j}-\frac{1}{4}\sum\limits_{j=1}^{\frac{p-3}{8}}\frac{1}{j} =2+\frac{1}{2}H_{[\frac{p}{4}]}-\frac{1}{4}H_{[\frac{p}{8}]}=2-\frac{1}{2}q_{p}(2)+\frac{1}{2}\chi_{p}\,(\mathrm{mod}\hspace{0.3mm}p)
\end{flalign*}
\end{fleqn}
\hspace{0cm}{\fontsize{11}{11pt}\text{and, using the second congruence~in~(\ref{equa:sum-p=3mod4}),}}
\begin{fleqn}[1cm]
\begin{align*}
\sum \limits_{k=0}^{ \frac{p-3}{8} }\frac{1}{4k+1}\equiv& -\sum \limits_{j=\frac{p-3}{8}}^{\frac{p-3}{4}}\frac{1}{4j+2} =-\frac{2}{p+1}+\sum\limits_{j=0}^{\frac{p-3}{8}}\frac{1}{4j+2}-\sum\limits_{j=0}^{\frac{p-3}{4}}\frac{1}{4j+2}\\
\equiv&-2+\sum\limits_{j=0}^{\frac{p-3}{8}}\frac{1}{4j+2}-\sum\limits_{j=0}^{\frac{p-3}{4}}\frac{1}{4j+2}=\frac{1}{4}H_{[\frac{p}{2}]}+\frac{1}{4}H_{[\frac{p}{4}]}-\frac{1}{4}H_{[\frac{p}{8}]}=2-\frac{1}{4}q_{p}(2)-\frac{1}{2}\chi_{p}\,(\mathrm{mod}\hspace{0.3mm} p).
\end{align*}
\end{fleqn}

\textbf{\textit{iii.}} Consider the case where $p\equiv 5\,(\mathrm{mod}\hspace{0.3mm} 8).$ We have
\begin{fleqn}[1cm]
\begin{flalign*}
\sum \limits_{k=0}^{ \frac{p-5}{8} }\frac{1}{4k+1}=&\sum\limits_{j=\frac{p+3}{8}}^{\frac{p-1}{4}}\frac{1}{4( \frac{p-5}{4}-j+1)+1}=\sum\limits_{j=\frac{p+3}{8}}^{\frac{p-1}{4}}\frac{1}{p-4j},\\
\sum\limits_{k=0}^{\frac{p-5}{8}}\frac{1}{4k+2}=&\frac{2}{p-1}+\frac{1}{2}\sum\limits_{j=1}^{\frac{p-5}{4}}\frac{1}{j}-\frac{1}{4}\sum\limits_{j=1}^{\frac{p-5}{8}}\frac{1}{j}
\end{flalign*}
\end{fleqn}
\hspace{0cm}{\fontsize{11}{11pt}\text{and}}
\begin{fleqn}[1cm]
\begin{flalign*}
\sum \limits_{k=0}^{ \frac{p-5}{8} }\frac{1}{4k+3}=&\sum\limits_{j=\frac{p-5}{8}}^{\frac{p-5}{4}}\frac{1}{4\big( \frac{p-1}{4}-j-1\big)+3}=\sum\limits_{j=\frac{p-5}{8}}^{\frac{p-5}{4}}\frac{1}{p-4j-2}.\\
\end{flalign*}
\end{fleqn}
It follows that
\begin{fleqn}[1cm]
\begin{flalign*}
\sum \limits_{k=0}^{ \frac{p-5}{8} }\frac{1}{4k+1}\equiv& -\frac{1}{4}\sum \limits_{j=\frac{p+3}{8}}^{\frac{p-1}{4}}\frac{1}{j} =-\frac{1}{4}H_{[\frac{p}{4}]}+\frac{1}{4}H_{[\frac{p}{8}]}=2-\frac{1}{4}q_{p}(2)-\frac{1}{2}\chi_{p}\,(\mathrm{mod}\hspace{0.3mm} p),\\
\sum\limits_{k=0}^{\frac{p-5}{8}}\frac{1}{4k+2}\equiv&\frac{1}{2}\sum\limits_{j=1}^{\frac{p-1}{4}}\frac{1}{j}-\frac{1}{4}\sum\limits_{j=1}^{\frac{p-5}{8}}\frac{1}{j} =\frac{1}{2}H_{[\frac{p}{4}]}-\frac{1}{4}H_{[\frac{p}{8}]}=-\frac{1}{2}q_{p}(2)+\frac{1}{2}\chi_{p}\,(\mathrm{mod}\hspace{0.3mm}p)
\end{flalign*}
\end{fleqn}
\hspace{0cm}{\fontsize{11}{11pt}\text{and}}
\begin{fleqn}[1cm]
\begin{align*}
\sum \limits_{k=0}^{ \frac{p-5}{8} }\frac{1}{4k+3}\equiv& \sum \limits_{j=0}^{\frac{p-13}{8}}\frac{1}{4j+2}-\sum \limits_{j=0}^{\frac{p-5}{4}}\frac{1}{4j+2} =\sum\limits_{j=0}^{\frac{p-5}{8}}\frac{1}{4j+2}-\sum\limits_{j=0}^{\frac{p-1}{4}}\frac{1}{4j+2}+\frac{1}{p+1}-\frac{2}{p-1}\\
\equiv&3+\sum\limits_{j=0}^{\frac{p-5}{8}}\frac{1}{4j+2}-\sum\limits_{j=0}^{\frac{p-1}{4}}\frac{1}{4j+2}\\
\equiv& 2+\frac{1}{4}H_{[\frac{p}{2}]}+\frac{1}{4}H_{[\frac{p}{4}]}-\frac{1}{4}H_{[\frac{p}{8}]}=2-\frac{1}{4}q_{p}(2)+\frac{1}{2}\chi_{p}\,(\mathrm{mod}\hspace{0.3mm}p),
\end{align*}
\end{fleqn}

\textbf{\textit{iv.}} For $p\equiv 7\,(\mathrm{mod}\hspace{0.3mm} 8)$, we have
\begin{fleqn}[1cm]
\begin{flalign*}
\sum \limits_{k=0}^{ \frac{p-7}{8} }\frac{1}{4k+1}=&\sum\limits_{j=\frac{p+1}{8}}^{\frac{p-3}{4}}\frac{1}{4( \frac{p-3}{4}-j)+1}=\sum\limits_{j=\frac{p+1}{8}}^{\frac{p-3}{4}}\frac{1}{p-4j-2},\\
\sum\limits_{k=0}^{\frac{p-7}{8}}\frac{1}{4k+2}=&\frac{2}{p-3}+\frac{1}{2}\sum\limits_{j=1}^{\frac{p-7}{4}}\frac{1}{j}-\frac{1}{4}\sum\limits_{j=1}^{\frac{p-7}{8}}\frac{1}{j}=\frac{1}{2}\sum\limits_{j=1}^{\frac{p-3}{4}}\frac{1}{j}-\frac{1}{4}\sum\limits_{j=1}^{\frac{p-7}{8}}\frac{1}{j}
\end{flalign*}
\end{fleqn}
\hspace{0cm}{\fontsize{11}{11pt}\text{and}}
\begin{fleqn}[1cm]
\begin{flalign*}
\sum \limits_{k=0}^{ \frac{p-7}{8} }\frac{1}{4k+3}=&\sum\limits_{j=\frac{p+1}{8}}^{\frac{p-3}{4}}\frac{1}{4\big( \frac{p-7}{4}-j+1\big)+3}=\sum\limits_{j=\frac{p+1}{8}}^{\frac{p-3}{4}}\frac{1}{p-4j}.\\
\end{flalign*}
\end{fleqn}
Thus
\begin{fleqn}[1cm]
\begin{flalign*}
\sum \limits_{k=0}^{ \frac{p-7}{8} }\frac{1}{4k+3}\equiv& -\frac{1}{4}\sum \limits_{j=\frac{p+1}{8}}^{\frac{p-3}{4}}\frac{1}{j} =-\frac{1}{4}H_{[\frac{p}{4}]}+\frac{1}{4}H_{[\frac{p}{8}]}=-\frac{1}{4}q_{p}(2)-\frac{1}{2}\chi_{p}\,(\mathrm{mod}\hspace{0.3mm}p)),\\
\sum\limits_{k=0}^{\frac{p-5}{8}}\frac{1}{4k+2}\equiv&\frac{1}{2}\sum\limits_{j=1}^{\frac{p-1}{4}}\frac{1}{j}-\frac{1}{4}\sum\limits_{j=1}^{\frac{p-5}{8}}\frac{1}{j} =\frac{1}{2}H_{[\frac{p}{4}]}-\frac{1}{4}H_{[\frac{p}{8}]}=-\frac{1}{2}q_{p}(2)+\frac{1}{2}\chi_{p}\,(\mathrm{mod}\hspace{0.3mm}p)
\end{flalign*}
\end{fleqn}
\hspace{0cm}{\fontsize{11}{11pt}\text{and}}
\begin{fleqn}[1cm]
\begin{align*}
\sum \limits_{k=0}^{ \frac{p-7}{8} }\frac{1}{4k+1}\equiv& -\sum \limits_{j=\frac{p+1}{8}}^{\frac{p-3}{4}}\frac{1}{4j+2} =\frac{1}{4}H_{[\frac{p}{2}]}+\frac{1}{4}H_{[\frac{p}{4}]}-\frac{1}{4}H_{[\frac{p}{8}]}=-\frac{1}{4}q_{p}(2)+\frac{1}{2}\chi_{p}\,(\mathrm{mod}\hspace{0.3mm} p).
\end{align*}
\end{fleqn}
\end{proof}


\begin{pro}\label{P}
{\sl Let $p\geq5$ be a prime number and $n,k$ be positive integers. We have
\begin{align}
\binom{np-1}{4k}_{3}&\equiv1-np\left(\frac{3}{4}H_{k}+\sum \limits_{j=0}^{k-1}\frac{1}{4j+3}\right)\,(\mathrm{mod}\hspace{0.3mm}p^{2}),&4k&\leq p-1,\label{Prop2_np-1_4k}\\
\binom{np-1}{4k+1}_{3}&\equiv-1+np\left(\frac{3}{4}H_{k}+\sum \limits_{j=0}^{k}\frac{1}{4j+1}\right)\,(\mathrm{mod}\hspace{0.3mm}p^{2}),&4k&+1\leq p-1,\label{Prop2_np-1_4k+1}\\
\binom{np-1}{4k+2}_{3}&\equiv np\left(\sum\limits_{j=0}^{k}\frac{1}{4j+2}-\sum \limits_{j=0}^{k}\frac{1}{4j+1}\right)\,(\mathrm{mod}\hspace{0.3mm} p^2),&4k&+2\leq p-1,\label{Prop2_np-1_4k+2}\\
\binom{np-1}{4k+3}_{3}&\equiv np\left(\sum\limits_{j=0}^{k}\frac{1}{4j+3}-\sum \limits_{j=0}^{k}\frac{1}{4j+2}\right)\,(\mathrm{mod}\hspace{0.3mm} p^2)
,&4k&+2\leq p-1.\label{Prop2_np-1_4k+3}
\end{align}
}
\end{pro}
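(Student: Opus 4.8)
The plan is to exploit the factorization $1+x+x^{2}+x^{3}=(1+x)(1+x^{2})$, which gives $(1+x+x^{2}+x^{3})^{np-1}=(1+x)^{np-1}(1+x^{2})^{np-1}$ and hence, comparing coefficients of $x^{m}$,
\[
\binom{np-1}{m}_{3}=\sum_{j\ge0}\binom{np-1}{j}\binom{np-1}{m-2j}.
\]
The second ingredient is the elementary congruence, valid for $0\le i\le p-1$,
\[
\binom{np-1}{i}=(-1)^{i}\prod_{t=1}^{i}\Bigl(1-\frac{np}{t}\Bigr)\equiv(-1)^{i}\bigl(1-npH_{i}\bigr)\pmod{p^{2}},
\]
obtained by expanding the product and discarding the terms divisible by $p^{2}$, each $t\in\{1,\dots,i\}$ being a unit modulo $p^{2}$. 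Under the stated size hypothesis every index $j$ and $m-2j$ occurring above lies in $\{0,\dots,p-1\}$, so substitution and reduction modulo $p^{2}$ (killing the products of two $np$-terms) give
\[
\binom{np-1}{m}_{3}\equiv\sum_{j=0}^{\lfloor m/2\rfloor}(-1)^{m-j}\Bigl(1-np\bigl(H_{j}+H_{m-2j}\bigr)\Bigr)\pmod{p^{2}}.
\]

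For each residue of $m$ modulo $4$ I would then separate the constant contribution $\sum_{j}(-1)^{m-j}$, which collapses to $1$ for $m\equiv0$, to $-1$ for $m\equiv1$ and to $0$ for $m\equiv2,3\pmod4$, from the harmonic contribution $-np\sum_{j}(-1)^{m-j}\bigl(H_{j}+H_{m-2j}\bigr)$. The latter splits as $\sum_{j}(-1)^{m-j}H_{j}$ plus, after the substitution $j\mapsto\lfloor m/2\rfloor-j$ which turns $H_{m-2j}$ into $H_{2i}$ or $H_{2i+1}$ according as $m$ is even or odd, a sum of the shape $\sum_{i}(-1)^{i}H_{2i}$ or $\sum_{i}(-1)^{i}H_{2i+1}$.

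The computational heart is the evaluation of these alternating harmonic sums. Pairing consecutive terms via $H_{j}-H_{j-1}=1/j$ yields $\sum_{j=1}^{2k}(-1)^{j}H_{j}=\tfrac12 H_{k}$ and $\sum_{j=1}^{2k+1}(-1)^{j}H_{j}=\tfrac12 H_{k}-H_{2k+1}$; interchanging the order of summation in $\sum_{i=1}^{2k}(-1)^{i}\sum_{t\le2i}\tfrac1t$ and sorting $t$ by its class mod $4$ gives $\sum_{i=0}^{2k}(-1)^{i}H_{2i}=\tfrac14 H_{k}+\sum_{j=0}^{k-1}\tfrac1{4j+3}$, whence also $\sum_{i=0}^{2k}(-1)^{i}H_{2i+1}=\sum_{i=0}^{2k}(-1)^{i}H_{2i}+\sum_{i=0}^{2k}\tfrac{(-1)^{i}}{2i+1}=\tfrac14 H_{k}+\sum_{j=0}^{k}\tfrac1{4j+1}$. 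Feeding these four evaluations into the cases $m=4k,4k+1,4k+2,4k+3$ gives the claimed formulas immediately for $m=4k$ and $m=4k+1$; for $m=4k+2$ and $m=4k+3$, where the constant part vanishes and the whole answer is carried by the $np$-term, one additionally expands $H_{4k+2}$ and $H_{4k+3}$ by residue classes modulo $4$ and uses $\tfrac1{2j+1}=\tfrac2{4j+2}$ to collapse the expression into the stated difference of partial sums.

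The main obstacle is entirely one of bookkeeping: tracking signs, summation ranges, and endpoint corrections through the reindexing $j\mapsto\lfloor m/2\rfloor-j$ and through the alternating harmonic identities. This is most delicate in the two cases $m\equiv2,3\pmod4$, where the leading term is $0$, so the answer rests solely on the $np$-term and an extra cancellation — via the mod-$4$ splitting of $H_{4k+2}$ and $H_{4k+3}$ — is needed to recognize the asserted form.
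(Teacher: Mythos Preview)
Your proposal is correct and follows essentially the same route as the paper: both start from the factorization $1+x+x^{2}+x^{3}=(1+x)(1+x^{2})$ to write $\binom{np-1}{m}_{3}=\sum_{j}\binom{np-1}{j}\binom{np-1}{m-2j}$, apply $\binom{np-1}{i}\equiv(-1)^{i}(1-npH_{i})\pmod{p^{2}}$, and then evaluate the resulting alternating sums $\sum_{j}(-1)^{j}H_{j}$ and (after the same reindexing $j\mapsto\lfloor m/2\rfloor-j$) $\sum_{i}(-1)^{i}H_{2i}$ or $\sum_{i}(-1)^{i}H_{2i+1}$ via telescoping/pairing. The only difference is organizational: you isolate the four alternating-harmonic identities first and then plug them in, whereas the paper carries out each of the four cases $m=4k,4k+1,4k+2,4k+3$ by direct line-by-line manipulation; the underlying computations coincide.
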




\begin{proof}
We have
\[
(1+x+x^{2}+x^3)^{n}=(1+x^2)^n(1+x)^n=\left(\sum\limits_{j=0}^{n}\binom{n}{j}_{3}x^{2j}\right)\left(\sum\limits_{l=0}^{n}\binom{n}{l}_{3}x^{l}\right).
\]
It follows that
\begin{equation}\label{Forme_explicite bi3nomial}
\binom{n}{k}_{3}=\underset{\underset{0\leq l,j\leq n}{2j+l=k}}{\sum\limits}\binom{n}{j}\binom{n}{l}=\sum\limits_{j=0}^{\min (n,[\frac{k}{2}])}
\binom{n}{j}\binom{n}{k-2j}.
\end{equation}
Combining (\ref{Forme_explicite bi3nomial}) with the relation $\binom{np-1}{k}=(-1)^{k}\overset{k}{\underset{j=1}{\prod}}\left(1-\frac{np}{j}\right)\equiv (-1)^{k}\big(1-npH_{k}\big)\hspace{1mm}(\mathrm{mod}\hspace{0.3mm} p^2)$ we obtain that
\begin{align*}
\binom{np-1}{k}_{3}&=\sum \limits_{j=0}^{\min (np-1,[\frac{k}{2}])}\binom{np-1}{j}\binom{np-1}{k-2j}\\
&\equiv  \sum \limits_{j=0}^{\min (np-1,[\frac{k}{2}])}(-1)^{k-j}\Big(1-np(H_{j}+H_{k-2j})\Big)\hspace{1mm}(\mathrm{mod}\hspace{0.3mm} p^2).
\end{align*}
\newpage
\hfill
\vspace{0.4cm}

\noindent Then, for the congruence (\ref{Prop2_np-1_4k}), we have
\begin{flushleft}
\begin{align*}
\hspace{-0.84cm}
\binom{np-1}{4k}_{3}\equiv & \sum \limits_{j=0}^{2k}(-1)^{j}\Big(1-np(H_{j}+H_{k-2j})\Big)\hspace{1mm}(\mathrm{mod}\hspace{0.3mm} p^2)\\
=&1-np\left(\sum \limits_{j=0}^{2k}(-1)^{j}H_{j}+\sum \limits_{j=0}^{2k}(-1)^{j}H_{4k-2j}\right)\\
=&1-np\sum \limits_{j=0}^{2k}(-1)^{j}\big(H_{j}+H_{2j}\big)\\
=&1-np\left(H_{2k}+H_{4k}-\sum\limits_{j=0}^{k-1}\Big(H_{2j+1}-H_{2j}+H_{4j+2}-H_{4j}\Big)\right) \\
=&1-np\left[\sum\limits_{j=1}^{k}\frac{1}{2j}+\sum \limits_{j=0}^{k-1}\frac{1}{2j+1}+\sum \limits_{j=1}^{k}\frac{1}{4j}+\sum \limits_{j=0}^{k-1}\left(\frac{1}{4j+1}+\frac{1}{4j+2}+\frac{1}{4j+3}\right)\right]\\
&\hspace{2mm}+np\sum\limits_{j=0}^{k-1}\left(\frac{1}{2j+1}+\frac{1}{4j+1}+\frac{1}{4j+2}\right)  \\
=&1-np\left(\frac{3}{4}H_{k}+\sum \limits_{j=0}^{k-1}\frac{1}{4j+3}\right)\hspace{1mm}(\mathrm{mod}\hspace{0.3mm} p^2)  .
\end{align*}
\end{flushleft}
\medskip

\noindent Now, we show the congruence (\ref{Prop2_np-1_4k+1}). We have
\begin{flushleft}
\begin{align*}
\hspace{-1.5cm}
\binom{np-1}{4k+1}_{3}\equiv &-\sum \limits_{j=0}^{2k}(-1)^{j}\Big(1-np(H_{j}+H_{4k+1-2j})\Big)\left(\mathrm{mod}\hspace{0.3mm}p^{2}\right)\\
=&-1+np \sum \limits_{j=0}^{2k}(-1)^{j}\big(H_{j}+H_{2j+1}\big)\\
=&-1+np\left(H_{2k}+H_{4k+1}-\sum \limits_{j=0}^{k-1}\Big(H_{2j+1}-H_{2j}+H_{4j+3}-H_{4j+1}\Big)\right)\\
=&-1+np\left[ \sum \limits_{j=1}^{k}\frac{1}{2j}+\sum \limits_{j=0}^{k-1}\frac{1}{2j+1}+\frac{1}{4k+1}+\sum \limits_{j=1}^{k}\frac{1}{4j}+\sum \limits_{j=0}^{k-1}\left(\frac{1}{4j+1}+\frac{1}{4j+2}+\frac{1}{4j+3}\right)\right]\\
&\hspace{6mm}-np\sum \limits_{j=0}^{k-1}\left(\frac{1}{2j+1}+\frac{1}{4j+2}+\frac{1}{4j+3}\right)  \\
=&-1+np\left(\frac{3}{4}H_{k}+\sum \limits_{j=0}^{k}\frac{1}{4j+1}\right)\,(\mathrm{mod}\hspace{0.3mm}p^{2}).
\end{align*}
\end{flushleft}
Next, we prove the congruence (\ref{Prop2_np-1_4k+2}). We have
\begin{align*}
\binom{np-1}{4k+2}_{3}  & \equiv \sum \limits_{j=0}^{2k+1}(-1)^{j}\Big(1-np(H_{j}+H_{4k+2-2j})\Big)\,(\mathrm{mod}\hspace{0.3mm}p^{2}).\\
& =np \sum \limits_{j=0}^{2k+1}(-1)^{j}\big(H_{2j}-H_{j}\big)\\
& =np\left(H_{2k+1}-H_{2k}+H_{4k}-H_{4k+2}+\sum \limits_{j=0}^{k-1}\frac{1}{4j+2}-\sum \limits_{j=0}^{k-1}\frac{1}{4j+1}\right)\\
& =np\left(\frac{1}{2k+1}-\frac{1}{4k+1}-\frac{1}{4k+2}+\sum \limits_{j=0}^{k-1}\frac{1}{4j+2}-\sum \limits_{j=0}^{k-1}\frac{1}{4j+1}\right)\\
&=np\left(\sum \limits_{j=0}^{k}\frac{1}{4j+2}-\sum \limits_{j=0}^{k}\frac{1}{4j+1}\right)\,(\mathrm{mod}\hspace{0.3mm}p^{2}).
\end{align*}
\noindent Finally, for the congruence (\ref{Prop2_np-1_4k+3}), we have
\begin{align*}
\binom{np-1}{4k+3}_{3}& \equiv-\sum \limits_{j=0}^{2k+1}(-1)^{j}\Bigg(1-np\big(H_{j}+H_{4k+3-2j}\big)\Bigg)\\
& =np\left( \sum \limits_{j=0}^{2k+1}(-1)^{j}\big(H_{j}-H_{2j+1}\big)\right)\\
& =np\left(H_{2k}-H_{2k+1}+H_{4k+3}-H_{4k+1}+\sum \limits_{j=0}^{k-1}\frac{1}{4j+3}-\sum \limits_{j=0}^{k-1}\frac{1}{4j+2}\right)\\
& =np\left(\frac{1}{4k+3}-\frac{1}{4k+2}+\sum \limits_{j=0}^{k-1}\frac{1}{4j+3}-\sum \limits_{j=0}^{k-1}\frac{1}{4j+2}\right)\\
& =np\left(\sum \limits_{j=0}^{k}\frac{1}{4j+3}-\sum \limits_{j=0}^{k}\frac{1}{4j+2}\right)\,(\mathrm{mod}\hspace{0.3mm}p^{2}).
\end{align*}

\end{proof}


\section{Proof of the main results}
\bigskip

\begin{proof}[\sl Proof of Theorem \ref{thmalph:Bi3nomial-np-1-p-1-p-1-sur-2}]
For $p\equiv1$ $\left(  \mathrm{mod}\hspace{0.3mm} 4\right)$ let $4k=p-1$ in the congruence (\ref{Prop2_np-1_4k}). Then, by the congruences
(\ref{Harmonic4}) and (\ref{equa:sum-p=1mod4}) we get
\[
\binom{np-1}{p-1}_{3}\equiv1-np\left(\frac{3}{4}H_{\frac{p-1}{4}}+\sum \limits_{j=0}^{\frac{p-5}{4}}
\frac{1}{4j+3}\right)\equiv1+2npq_{p}(2)\,(\mathrm{mod}\hspace{0.3mm}p^{2}).
\]
For $p\equiv3$ $\left(  \mathrm{mod}\hspace{0.3mm} 4\right)  $ let $4k+2=p-1$ in the congruence (\ref{Prop2_np-1_4k+2}). Then, by the congruence (\ref{equa:sum-p=3mod4}) we get
\[
\binom{np-1}{p-1}_{3}\equiv np\left(\sum \limits_{j=0}^{\frac{p-3}{4}}\frac{1}{4j+2}-\sum \limits_{j=0}^{\frac{p-3}{4}}\frac{1}{4j+1}\right)  =-\frac{1}{2}npq_{p}(2)\hspace{1mm}(\mathrm{mod}\hspace{0.3mm} p^2).
\]
For $p\equiv1$ $\left(  \mathrm{mod}\hspace{0.3mm}8\right)  $ let $4k=\frac{p-1}{2}$ in the congruence (\ref{Prop2_np-1_4k}). Then, by the congruences
(\ref{Harmonic8}) and (\ref{equa:sum-p=1mod8}) we get
\[
\binom{np-1}{\frac{p-1}{2}}_{3}\equiv1-np\left(\frac{3}{4}H_{\frac{p-1}{8}}+
\sum \limits_{j=0}^{\frac{p-9}{8}}
\frac{1}{4j+3}\right)  \equiv1+np\Big(\frac{13}{4}q_{p}(2)+\chi_{p}\Big)\hspace{1mm}(\mathrm{mod}\hspace{0.3mm} p^2).
\]
For $p\equiv3$ $\left(  \mathrm{mod}\hspace{0.3mm}8\right)  $ let $4k+1=\frac{p-1}{2}$ in the congruence (\ref{Prop2_np-1_4k+1}). Then, by the congruences
(\ref{Harmonic8}) and (\ref{equa:sum-p=3mod8}) we get
\[
\binom{np-1}{\frac{p-1}{2}}_{3}\equiv-1+np\left( \frac{3}{4}H_{\frac{p-3}{8}}+\sum \limits_{j=0}^{\frac{p-3}{8}}
\frac{1}{4j+1}\right)=-1-np\left(\frac{13}{4}q_{p}(2)+\chi_{p}\right)\hspace{1mm}(\mathrm{mod}\hspace{0.3mm} p^2).
\]

\noindent For $p\equiv5$ $\left(  \mathrm{mod}\hspace{0.3mm}8\right)  $ let $4k+2=\frac{p-1}{2}$ in the congruence (\ref{Prop2_np-1_4k+2}). Then, by the congruence
(\ref{equa:sum-p=5mod8}) we get
\[
\binom{np-1}{\frac{p-1}{2}}_{3}\equiv np\left(\sum \limits_{j=0}^{\frac{p-5}{8}}\frac{1}{4j+2}-\sum \limits_{j=0}^{\frac{p-5}{8}}\frac{1}{4j+1}
\right)=-1-np\left(\frac{13}{4}q_{p}(2)+\chi_{p}\right)\hspace{1mm}(\mathrm{mod}\hspace{0.3mm} p^2).
\]

\noindent For $p\equiv7$ $\left(  \mathrm{mod}\hspace{0.3mm}8\right)  $ let $4k+3=\frac{p-1}{2}$ in the congruence (\ref{Prop2_np-1_4k+3}). Then, by the congruence
(\ref{equa:sum-p=7mod8}) we get
\[
\binom{np-1}{\frac{p-1}{2}}_{3}\equiv np\left(\sum \limits_{j=0}^{\frac{p-7}{8}}\frac{1}{4j+3}-\sum \limits_{j=0}^{\frac{p-7}{8}}\frac{1}{4j+2}
\right)=np\left(\frac{1}{4}q_{p}(2)-\chi_{p}\right)\hspace{1mm}(\mathrm{mod}\hspace{0.3mm} p^2).
\]
\end{proof}
\begin{proof}[\sl Proof of Proposition \ref{propalph:sommes-Bi3nomial-np-1-k}]For $k\in \left \{  0,1,\ldots,\left[\frac{p}{4}\right]  -1\right \} ,$ from Proposition~\ref{P} we deduce that
\begin{equation}\label{sommes-quadrinomials-4k--4k+3}
\binom{np-1}{4k}_{3}+\binom{np-1}{4k+1}_{3}+\binom{np-1}{4k+2}_{3}+\binom{np-1}{4k+3}_{3}\equiv\frac{np}{4k+3}\hspace{1mm}(\mathrm{mod}\hspace{0.3mm} p^2).
\end{equation}
To show the congruences (\ref{equa:sommes-Bi3nomial-np-1-k-k-0-p-1}), let
\[
\sum \limits_{k=0}^{p-1}\binom{np-1}{k}_{3}=\sum \limits_{k=0}^{[\frac{p-1}{4}]}\binom{np-1}{4k}_{3}+\sum \limits_{k=0}^{[\frac{p-2}{4}]}\binom{np-1}{4k+1}_{3}+\sum \limits_{k=0}^{[\frac{p-3}{4}]}\binom{np-1}{4k+2}_{3}+\sum \limits_{k=0}^{[\frac{p-4}{4}]}\binom{np-1}{4k+3}_{3}.
\]

\noindent For $p\equiv1$ $\left(\mathrm{mod}\hspace{0.3mm}3\right),$ by the congruences (\ref{sommes-quadrinomials-4k--4k+3}), (\ref{equa:congruence-Bi3nomial-np-1-p-1}) and (\ref{equa:sum-p=1mod4}), we obtain
\begin{align*}
\sum \limits_{k=0}^{p-1}\binom{np-1}{k}_{3}&=\sum \limits_{k=0}^{\frac{p-1}{4}}\binom{np-1}{4k}_{3}+\sum \limits_{k=0}^{\frac{p-1}{4}-1}\binom{np-1}{4k+1}_{3}+\sum \limits_{k=0}^{\frac{p-1}{4}-1}\binom{np-1}{4k+2}_{3}+\sum \limits_{k=0}^{\frac{p-1}{4}-1}\binom{np-1}{4k+3}_{3}\\
&=\binom{np-1}{p-1}_{3}+\sum \limits_{k=0}^{\frac{p-5}{4}}\left\{\binom{np-1}{4k+1}_{3}+\binom{np-1}{4k+2}_{3}+\binom{np-1}{4k+3}_{3}\right\}\\
&\equiv 1+2npq_{p}(2)+np\sum \limits_{k=0}^{\frac{p-5}{4}}\frac{1}{4k+3}\\
&\equiv 1+\frac{9}{4}npq_{p}(2).
\end{align*}

\noindent For $p\equiv3$ $\left(\mathrm{mod}\hspace{0.3mm}4\right),$ by the congruences (\ref{sommes-quadrinomials-4k--4k+3}), (\ref{Prop2_np-1_4k+3}) and (\ref{equa:sum-p=3mod4}), we obtain
\begin{align*}
\sum \limits_{k=0}^{p-1}\binom{np-1}{k}_{3}&=\sum \limits_{k=0}^{\frac{p-3}{4}}\binom{np-1}{4k}_{3}+\sum \limits_{k=0}^{\frac{p-3}{4}}\binom{np-1}{4k+1}_{3}+\sum \limits_{k=0}^{\frac{p-3}{4}}\binom{np-1}{4k+2}_{3}+\sum \limits_{k=0}^{\frac{p-7}{4}}\binom{np-1}{4k+3}_{3}\\
&=-\binom{np-1}{p}_{3}+\sum \limits_{k=0}^{\frac{p-3}{4}}\left\{\binom{np-1}{4k}_{3}+\binom{np-1}{4k+1}_{3}+\binom{np-1}{4k+2}_{3}+\binom{np-1}{4k+3}_{3}\right\}\\
&= np\sum \limits_{k=0}^{\frac{p-3}{4}}\frac{1}{4k+2}\\
&=-\frac{1}{4}q_{p}(2).
\end{align*}
To show the congruences (\ref{equa:sommes-Bi3nomial-np-1-k-k-0-p-1-sur-2}), let
\[
\sum \limits_{k=0}^{\frac{p-1}{2}}\binom{np-1}{k}_{3}=\sum \limits_{k=0}^{[\frac{p-1}{8}]}\binom{np-1}{4k}_{3}+\sum \limits_{k=0}^{[\frac{p-2}{8}]}\binom{np-1}{4k+1}_{3}+\sum \limits_{k=0}^{[\frac{p-4}{8}]}\binom{np-1}{4k+2}_{3}+\sum \limits_{k=0}^{[\frac{p-6}{8}]}\binom{np-1}{4k+3}_{3}.
\]

\noindent For $p\equiv1$ $\left(  \mathrm{mod}\hspace{0.3mm}8\right),$ by the congruences (\ref{sommes-quadrinomials-4k--4k+3}), (\ref{equa:congruence-Bi3nomial-np-1-p-1-sur-2}) and (\ref{equa:sum-p=7mod8}), we get
\begin{align*}
\sum \limits_{k=0}^{\frac{p-1}{2}}\binom{np-1}{k}_{3}&=\sum\limits_{k=0}^{\frac{p-1}{8}}\binom{np-1}{4k}_{3}+\sum \limits_{k=0}^{\frac{p-9}{8}}\binom{np-1}{4k+1}_{3}+\sum \limits_{k=0}^{\frac{p-9}{8}}\binom{np-1}{4k+2}_{3}+\sum \limits_{k=0}^{\frac{p-9}{8}}\binom{np-1}{4k+3}_{3}\\
&=\binom{np-1}{\frac{p-1}{2}}_{3}+\sum\limits_{k=0}^{\frac{p-9}{8}}\left\{\binom{np-1}{4k}_{3}+\binom{np-1}{4k+1}_{3}+\binom{np-1}{4k+2}_{3}+\binom{np-1}{4k+3}_{3}\right\}\\
&=\binom{np-1}{\frac{p-1}{2}}_{3}+np\left(\sum \limits_{k=0}^{\frac{p-9}{8}}\frac{1}{4k+3}\right)\\
&\equiv 1+np\left(\frac{13}{4}q_{p}(2)+\chi(p)-\frac{2}{5}+\sum \limits_{k=0}^{\frac{p-1}{8}}\frac{1}{4k+3}\right)\\
&=1+\frac{3}{2}np\Big(2q_{p}(2)+\chi(p)\Big).
\end{align*}
For $p\equiv3$ $\left(  \mathrm{mod}\hspace{0.3mm}8\right)  ,$ by the congruences (\ref{sommes-quadrinomials-4k--4k+3}), (\ref{equa:congruence-Bi3nomial-np-1-p-1-sur-2}) and (\ref{equa:sum-p=7mod8}), we get
\begin{align*}
\sum \limits_{k=0}^{\frac{p-1}{2}}\binom{np-1}{k}_{3}&=\sum\limits_{k=0}^{\frac{p-3}{8}}\binom{np-1}{4k}_{3}+\sum \limits_{k=0}^{\frac{p-3}{8}}\binom{np-1}{4k+1}_{3}+\sum \limits_{k=0}^{\frac{p-11}{8}}\binom{np-1}{4k+2}_{3}+\sum\limits_{k=0}^{\frac{p-11}{8}}\binom{np-1}{4k+3}_{3}\\
&=-\binom{np-1}{\frac{p+1}{2}}_{3}-\binom{np-1}{\frac{p+3}{2}}_{3}+np\sum\limits_{k=0}^{\frac{p-3}{8}}\frac{1}{4k+3}\\
&=np\sum\limits_{k=0}^{\frac{p-3}{8}}\frac{1}{4k+1}\\
&=-\frac{1}{4}np\Big(q_{p}(2)-2\chi(p)\Big).
\end{align*}
For $p\equiv5$ $\left(  \mathrm{mod}\hspace{0.3mm}8\right) ,$ by the congruences (\ref{sommes-quadrinomials-4k--4k+3}) and (\ref{equa:sum-p=7mod8}), we get

\begin{align*}
\sum \limits_{k=0}^{\frac{p-1}{2}}\binom{np-1}{k}_{3}&=\sum\limits_{k=0}^{\frac{p-5}{8}}\binom{np-1}{4k}_{3}+\sum \limits_{k=0}^{\frac{p-5}{8}}\binom{np-1}{4k+1}_{3}+\sum \limits_{k=0}^{\frac{p-5}{8}}\binom{np-1}{4k+2}_{3}+\sum\limits_{k=0}^{\frac{p-13}{8}}\binom{np-1}{4k+3}_{3}\\
&=-\binom{np-1}{\frac{p+1}{3}}_{3}+\sum\limits_{k=0}^{\frac{p-5}{8}}\left\{\binom{np-1}{4k}_{3}+\binom{np-1}{4k+1}_{3}+\binom{np-1}{4k+2}_{3}+\binom{np-1}{4k+3}_{3}\right\}\\
&=-np\sum \limits_{k=0}^{\frac{p-5}{8}}\frac{1}{4k+3}\\
&=-\frac{1}{2}np\Big(q_{p}(2)-\chi(p)\Big).
\end{align*}

\noindent For $p\equiv7$ $\left(  \mathrm{mod}\hspace{0.3mm}8\right)  ,$ by the congruences (\ref{sommes-quadrinomials-4k--4k+3}) and (\ref{equa:sum-p=7mod8}), we get

\begin{align*}
\sum \limits_{k=0}^{\frac{p-1}{2}}\binom{np-1}{k}_{3}&=\sum\limits_{k=0}^{\frac{p-7}{8}}\binom{np-1}{4k}_{3}+\sum \limits_{k=0}^{\frac{p-7}{8}}\binom{np-1}{4k+1}_{3}+\sum \limits_{k=0}^{\frac{p-7}{8}}\binom{np-1}{4k+2}_{3}+\sum\limits_{k=0}^{\frac{p-7}{8}}\binom{np-1}{4k+3}_{3}\\
&=\sum\limits_{k=0}^{\frac{p-7}{8}}\left\{\binom{np-1}{4k}_{3}+\binom{np-1}{4k+1}_{3}+\binom{np-1}{4k+2}_{3}+\binom{np-1}{4k+3}_{3}\right\}\\
&=np\sum \limits_{k=0}^{\frac{p-7}{8}}\frac{1}{4k+3}\\
&=-\frac{1}{4}np\Big(q_{p}(2)+2\chi(p)\Big).
\end{align*}
\end{proof}

\begin{proof}[\sl Proof of Corollary \ref{coralph:Bi3nomial-np2-1-k}]
This follows without difficulty from Proposition \ref{P}.
\end{proof}


\end{document}